\newcommand{\onenorm}[1]{\left\lvert#1\right\rvert}
\newtheorem{proposition}{Proposition}
\newtheorem{theorem}{Theorem}
\newtheorem{remark}{Remark}
\DeclareMathOperator*{\argminA}{arg\,min} % Jan Hlavacek
\DeclareMathOperator*{\argmaxA}{arg\,max}
\title{Using Affine Policies to Reformulate Two-Stage Wasserstein Distributionally Robust Linear Programs to be Independent of Sample Size\thanks{This work was supported in part by  the National Research Foundation of Korea funded by  MSIT(2020R1C1C1009766, 2021R1A4A2001824), the Information and Communications Technology Planning and Evaluation  grant funded by MSIT(2022-0-00480), and Samsung Electronics. 
A preliminary version of this work was presented at the 61st IEEE Conference on Decision and Control~\cite{cho2022affine}.} }
\author{Youngchae Cho \and
 Insoon Yang\thanks{A. Hakobyan, and I. Yang are with the Department of Electrical and Computer Engineering and ASRI, Seoul National University, Seoul, 08826, South Korea {\tt\small \{youngchaecho, insoonyang\}@snu.ac.kr}}
}
\date{}
\begin{document}
\maketitle

\pagestyle{myheadings}
\thispagestyle{plain}

\begin{abstract}
Intensively studied in theory as a promising data-driven tool for decision-making under ambiguity, two-stage distributionally robust optimization (DRO) problems over Wasserstein balls are not necessarily easy to solve in practice. This is partly due to large sample size. In this article, we study a generic two-stage distributionally robust linear program (2-DRLP) over a 1-Wasserstein ball using an affine policy. The 2-DRLP has right-hand-side uncertainty with a rectangular support. Our main contribution is to show that the 2-DRLP problem has a tractable reformulation with a scale independent of sample size. The reformulated problem can be solved within a pre-defined optimality tolerance using robust optimization techniques. To reduce the inevitable conservativeness of the affine policy while preserving  independence of sample size, we further develop a method for constructing an uncertainty set with a probabilistic guarantee over which the Wasserstein ball is re-defined. As an application, we present a novel unit commitment model for power systems under uncertainty of renewable energy generation to examine the effectiveness of the proposed 2-DRLP technique. Extensive numerical experiments demonstrate that our model leads to better out-of-sample performance on average than other state-of-the-art distributionally robust unit commitment models while staying computationally competent. 
\end{abstract}

\section{Introduction}\label{sec:intro}
Two-stage optimization is a popular tool for sequential decision-making under uncertainty, where the decision maker makes two kinds of decisions, i.e., here-and-now and wait-and-see decisions, before and after observing the realization of uncertainty, respectively.  Due to its generality, two-stage optimization has seen many applications in various research fields such as inventory management \cite{nikzad2019two}, workforce management \cite{mattia2017staffing}, location planning \cite{oksuz2020two}, and power system operations \cite{bertsimas2013adaptive,zhao2019two}. In the present article, we consider a class of two-stage optimization problems based on distributionally robust optimization (DRO) with the Wasserstein metric.

\subsection{Backgrounds}\label{sec:backgrounds}
Two-stage optimization approaches can be conveniently classified by the stochastic optimization method. Among the most-studied stochastic optimization methods for two-stage optimization are stochastic programming (SP), robust optimization (RO) and DRO. A usual objective of SP is to minimize the expected total cost, i.e., a sum of the deterministic cost associated with here-and-now decisions and the expected cost associated with wait-and-see decisions, with respect to a probability distribution of uncertainty \cite{shapiro1998simulation}. As the true distribution of uncertainty is difficult to obtain, an empirical distribution is used instead in most cases. For this reason, SP works well only with large sample datasets. Without struggling to acquire the true distribution, RO uses worst-case analyses over an {\it uncertainty set} (a set of possible scenarios of uncertainty) with the common aim of minimizing the worst-case total cost, i.e., a sum of the deterministic cost associated with here-and-now decisions and the worst-case cost associated with wait-and-see decisions \cite{ben2004adjustable}. However, RO is often overly conservative as it ignores probabilistic features of uncertainty, which can be partially obtained through samples.

To mitigate the disadvantages of SP and RO simultaneously, DRO uses worst-case analyses for an {\it ambiguity set}, i.e., a family of probability distributions of uncertainty. A typical goal of DRO is to minimize the expected total cost with respect to worst-case distributions in an ambiguity set. Incorporating probabilistic features while hedging against the potential inappropriateness of any single pre-specified distribution, DRO better balances efficiency and robustness compared to SP and RO. For details of general DRO problems, see, for example, \cite{rahimian2019distributionally} and the references therein. 

Performances of DRO greatly depend on how the ambiguity set is chosen. 
For example, ambiguity sets can be defined using $f$-divergences \cite{bayraksan2015data}, e.g., the Kullback--Leibler (KL) divergence \cite{jiang2016data} and the total variation distance \cite{sun2016convergence}, as well as moment conditions \cite{delage2010distributionally,bertsimas2019adaptive}. However, these ambiguity sets have a few limitations. First, ambiguity sets based on $f$-divergences may not be rich enough as they include only distributions that are absolutely continuous with respect to a nominal distribution. Moreover, the underlying assumption of moment information known a priori for DRO based on moment conditions hardly seems justifiable \cite{delage2010distributionally}. Reportedly, moment-based DRO solutions may also be overly conservative~\cite{wang2016likelihood}.

Ambiguity sets can be constructed using the Wasserstein metric as well \cite{kuhn2019wasserstein,gao2022distributionally}. A {\it Wasserstein ball} is defined as a statistical ball in the space of probability distributions, the radius of which is measured using the Wasserstein metric. Intuitively, the Wasserstein distance of two distributions is interpreted as the minimum cost of redistributing the probability mass from one distribution to the other. The center of a Wasserstein ball is mostly an empirical distribution constructed with a finite number of samples. As the elements of a Wasserstein ball are perturbations of the nominal distribution that are obtained considering the distance of uncertain scenarios, Wasserstein DRO does not suffer from the aforementioned drawbacks of DRO based on $f$-divergences or moment conditions. Moreover, Wasserstein DRO offers a strong finite-sample performance guarantee~\cite{esfahani2018data}. For these reasons, we focus on two-stage Wasserstein DRO in this article. 

\subsection{Related Work}
Research works providing solution methods for two-stage Wasserstein DRO in general forms includes \cite{esfahani2018data,zhao2018data,gamboa2021decomposition,wang2021second,duque2022distributionally,hanasusanto2018conic,bertsimas2022two,xie2020tractable,gangammanavar2022stochastic,byeon2022two,bansal2018decomposition,kim2020dual} 
all of which, except for \cite{esfahani2018data}, consider linear costs of here-and-now and wait-and-see decisions. Specifically, \cite{zhao2018data,gamboa2021decomposition,wang2021second,duque2022distributionally,hanasusanto2018conic,bertsimas2022two,xie2020tractable,gangammanavar2022stochastic} deal with two-stage distributionally robust linear programs (2-DRLPs) over Wasserstein balls, where the second-stage problem to optimize wait-and-see decisions is a linear program (LP) while here-and-now decision variables can be integer or continuous. In \cite{zhao2018data}, it is briefly mentioned that 2-DRLPs over 1-Wasserstein balls can be reformulated as tractable semi-infinite or finite-dimensional optimization problems if the 1-, 2- or $\infty$-norm is used as the metric on the support. In \cite{gamboa2021decomposition}, decomposition algorithms are developed for solving exact reformulations of 2-DRLPs over 1-Wasserstein balls with the 1- and $\infty$-norm, assuming right-hand-side uncertainty and a rectangular uncertainty set. The algorithms build on Benders decomposition \cite{rahmaniani2017benders} and the column-and-constraint generation (C\&CG) method \cite{zeng2013solving}. In \cite{wang2021second}, a second-order conic programming approach is employed to derive tractable reformulations of 2-DRLPs over 1-Wasserstein balls with the $2$-norm, assuming that uncertainty appears in either the objective function or the constraints. In \cite{duque2022distributionally}, cutting-plane algorithms are used to exactly solve 2-DRLPs over 1-Wasserstein balls with either the generic $p$-norm for $p\geq1$ or a class of quadratic functions. In \cite{hanasusanto2018conic}, 2-DRLPs with the Wasserstein metric of order 2 are exactly solved using conic programming approaches. In \cite{bertsimas2022two}, 2-DRLPs over $\infty$-Wasserstein balls with the $p$-norm are approximately solved by applying multiple decision policies, one for an uncertainty set associated with each sample data point. This scheme achieves optimality asymptotically, i.e., as the number of samples goes to infinity. In \cite{xie2020tractable}, tractable reformulations of 2-DRLPs over $\infty$-Wasserstein balls with uncertainty in either the objective function or the constraints are presented for different continuity conditions on the uncertainty. In \cite{gangammanavar2022stochastic}, a sequential algorithm is developed for general two-stage DRO problems and applied to 2-DRLPs over 1- and $\infty$-Wasserstein balls for demonstration. This algorithm creates at each iteration a Wasserstein ball using only a finite subset of the support as an approximation to the original ambiguity set. With a new observation added at each iteration, the algorithm is proved to achieve asymptotic optimality. 

References \cite{byeon2022two,bansal2018decomposition,kim2020dual,esfahani2018data} address more general classes of two-stage Wasserstein DRO problems than 2-DRLPs. In \cite{byeon2022two}, two-stage distributionally robust conic LPs over 1-Wasserstein balls are considered, for which a cutting-plane algorithm based on Benders decomposition is suggested. In \cite{bansal2018decomposition} and \cite{kim2020dual}, decomposition methods are developed assuming that both here-and-now and wait-and-see decisions are at least partially binary. The authors of \cite{esfahani2018data} study a class of two-stage DRO problems over 1-Wasserstein balls where the costs of wait-and-see decisions are written as point-wise maximums of finitely many concave functions of uncertainty. Using main results, tractable reformulations of 2-DRLPs over 1-Wasserstein balls with uncertainty in either the objective function or the right-hand-side of constraints are presented in \cite{esfahani2018data}. 

Notably, most of the existing solution methods for two-stage Wasserstein DRO problems, including those suggested in \cite{zhao2018data,esfahani2018data,gamboa2021decomposition,wang2021second,duque2022distributionally,hanasusanto2018conic,bertsimas2022two,xie2020tractable,gangammanavar2022stochastic,byeon2022two,bansal2018decomposition,kim2020dual}, have a scalability issue regarding sample size, i.e., the number of historical sample data. In other words, the existing solution methods require more computational resources for more samples. This implies that two-stage Wasserstein DRO problems may not yield desired solutions that fully exploit historical data at hand when computational resources are limited. 

\subsection{Contributions}
In this article, we study a generic 2-DRLP over a 1-Wasserstein ball, which has right-hand-side uncertainty with a rectangular support, using an affine policy. Affine policies are a frequently used solution method for two-stage optimization problems which impose the linear dependence of wait-and-see decisions on uncertain parameters. First developed in the context of SP \cite{holt1955linear,charnes1958cost,garstka1974decision}, affine policies had been disregarded by the operations research community due to their intrinsic conservativeness that is hard to meaningfully quantify \cite{georghiou2021optimality}. 
A few decades later, however, affine policies have gained wide attention in the fields of not only SP \cite{bodur2018two} but also RO \cite{ben2004adjustable,bertsimas2015performance,el2021optimality} as well as control theory for dynamical systems  \cite{bemporad2003min,kerrigan2003robust,skaf2010design,bertsimas2010optimality,hadjiyiannis2011efficient,zhang2017robust} due to their superior tractability and desirable properties related to cost performances such as robust invariance \cite{bertsimas2012power}. 
Not only studied in theory, affine policies have seen many applications thereafter as well, e.g., in portfolio management \cite{calafiore2008multi,fonseca2012international} and power system operations \cite{lorca2016multistage,duan2018distributionally,dehghan2019robust,ratha2020affine}.
Furthermore, researchers have successfully extended these approaches by using piecewise affine \cite{ben2020tractable,thoma2022designing},
segregated affine \cite{chen2008linear,chen2009uncertain} 
and polynomial \cite{bampou2011scenario}
policies. 

The main contributions of this study are three-fold. First, we show that the 2-DRLP of our interest has a tractable reformulation with a scale independent of sample size. For this, we first recast the worst-case expectation problem nested in it, which is infinite-dimensional, as a finite convex program with a scale that grows with sample size. We then aggregate optimization variables associated with different sample indices, which intuitively represent perturbation of samples, exploiting the fact that they have the same cost coefficient due to the affine policy. This yields an LP equivalent to the nested infinite-dimensional program, the scale of which is invariant with sample size. Finally, using duality in LPs, we obtain a finite-dimensional mixed-integer LP (MILP) as an exact reformulation of the 2-DRLP.
The reformulated problem can be solved up to a pre-defined precision by RO techniques. We also present a cutting-plane algorithm for the reformulated problem. As a result, many samples can be efficiently exploited without relying on computationally expensive decomposition algorithms. To the best of our knowledge, our study is the first to reveal that affine policies can resolve the scalability issue regarding sample size in a general class of two-stage Wasserstein DRO problems.\footnote{Although the scalability issue is addressed by \cite{zhu2019wasserstein} for the unit commitment (UC) problem, the method in \cite{zhu2019wasserstein} is  applicable only when the cost of wait-and-see decisions calculated using an affine policy is univariate. In contrast, we do not impose any special assumption on the affine policy.}

Meanwhile, the optimality gap incurred by the affine policy can be arbitrarily large when the size of the Wasserstein ambiguity set is big enough. We assert that it is also true for any value of the radius, because the optimality gap as a function of the radius is a difference of two concave functions, which can be neither increasing nor decreasing in general. To reduce the inevitable conservativeness of the affine policy, we re-define the Wasserstein ball on an uncertainty set smaller than the support. Our second main contribution is to design a data-driven method for constructing an uncertainty set with a bounded worst-case confidence level, over which the Wasserstein ball is rebuilt. Since the feasibility of the affine policy is guaranteed on a smaller uncertainty set, more efficient solutions can be obtained by using our method. Unlike existing data-driven methods for building an uncertainty set with a similar probabilistic guarantee, our method ensures that  the 2-DRLP  does not depend on sample size. 

Finally, to illustrate the applicability and effectiveness of the 2-DRLP approach using an affine policy for practical decision-making problems, we develop a novel UC model for power systems under the uncertainty of renewable generation. Extensive numerical experiments demonstrate that the proposed UC model outperforms not only classical models based on SP and RO but four state-of-the-art models based on DRO using ambiguity sets with the moment conditions \cite{zhou2019distributionally}, KL divergence \cite{chen2018distributionally}, 1-norm distance \cite{ding2018duality} and cumulative density function (CDF) \cite{duan2017data} in terms of out-of-sample performance, while staying computationally competent. 

The rest of this article is organized as follows. In Section \ref{sec:problem}, we formulate the 2-DRLP of our interest. In Section \ref{sec:appx}, we show that the 2-DRLP has a tractable reformulation with a scale independent of sample size. Furthermore, we provide a cutting-plane algorithm for solving the reformulated problem. In Section \ref{sec:support}, we explain how to construct an uncertainty set with a probabilistic guarantee, over which we rebuild the Wasserstein ball to reduce conservativeness. In Section \ref{sec:UC}, we present the novel UC model based on the 2-DRLP approach using an affine policy and discuss simulation results. In Section \ref{sec:conclusions}, we give concluding remarks. 

\noindent\textbf{Notation.} We denote by $\mathbb R$, $\mathbb R_+$, and $\mathbb R_-$ the sets of all real numbers, non-negative real numbers, and non-positive real numbers, respectively. For a natural number $n$, we denote by $1_n$, $0_n$, $I_n$, and $O_n$ the vector of ones, vector of zeros, identical matrix, and square zero matrix, respectively, all of dimension $n$. Furthermore, $\left[\cdot\right]_n$ represents the $n$th entry of a vector. We use $\onenorm{\,\cdot\,}$ to denote the 1-norm of a vector or the cardinality of a finite set. We also denote by $\left(\cdot\right)^\top$, $\delta_{\left(\cdot\right)}$, $\mathbb E$, $\circ$, $\left(\cdot\right)^{\circ}$, and ${\mathcal V}\left(\cdot\right)$ the transpose of a vector or matrix, Dirac delta distribution centered at a given point, expectation operator, entrywise product operator for two vectors, interior of a subset of a Euclidean space, and vertex set of a convex polytope, respectively.

\section{Problem Formulation}\label{sec:problem}
In this section, we formulate a two-stage Wasserstein DRO problem of our interest using an affine policy. To this end, we first consider the 2-DRLP 
\begin{equation}\label{eq:problem}
\min_{x_1\in{\mathcal X}_1}c^\top_1 x_1 + \max_{{\mathbb P}\in{\mathcal P}_{\varepsilon}\left(\Xi\right)}{\mathbb E}_{\mathbb P}\left[f\left(x_1,\xi\right)\right]
\end{equation}
where
\begin{equation}\label{eq:f}
f\left(x_1,\xi\right):=\min_{x_2\in{\mathcal X}_2\left(x_1,\xi\right)} c_2^\top x_2
\end{equation}
denotes the optimal cost of wait-and-see decisions. Here, $\xi\in{\mathbb R}^m$ and $\Xi\subset{\mathbb R}^m$ denote a random vector and its support, respectively. The support $\Xi$ is a bounded box that is known, i.e., $\Xi=[\underline{\xi},\overline{\xi}]$ where $\underline{\xi},\overline{\xi}\in{\mathbb R}^m$ can be obtained using a priori knowledge. We assume that $N$ historical samples ${\xi}_1,\ldots,{\xi}_N$ of ${\xi}$ are available and denote the index set of samples by ${\mathcal I}:=\left\{1,\ldots,N\right\}$.  

In (\ref{eq:problem}), $x_1\in\left\{0,1\right\}^{n_{11}}\times{\mathbb R}^{n_{12}}$ and $c_1\in{\mathbb R}^{n_{1}}$ with $n_1:=n_{11} + n_{12}$ represent a here-and-now decision vector and its cost coefficient vector, respectively. The feasible set ${\mathcal X}_1$ of $x_1$ is defined with finitely many linear inequalities. The symbol ${\mathcal P}_{\varepsilon}\left(\cdot\right)$ denotes a 1-Wasserstein ball on a given uncertainty set, which is a ball of radius $\varepsilon>0$ centered at an empirical distribution ${\mathbb P}_{\rm e}:=\frac{1}{N}\sum_{i\in{\mathcal I}}\delta_{{\xi}_i}$ in the space of probability distributions supported on the given uncertainty set. Specifically, we let 
\[
{\mathcal P}_{\varepsilon}\left(\cdot\right):= \left\{{\mathbb P}\in{\mathcal P}\left(\cdot\right): d\left({\mathbb P},{\mathbb P}_{\rm e}\right)\leq\varepsilon\right\}
\]
where ${\mathcal P}\left(\cdot\right)$ represents the family of all probability distributions supported on a given uncertainty set. 
Furthermore, $d$ denotes the Wasserstein metric of order 1 defined with the 1-norm, i.e.,  
\[
d\left({\mathbb P},{\mathbb P}^\prime\right):=\inf_{\pi\in\Pi\left({\mathbb P},{\mathbb P}^\prime\right)}\int_{{\Xi}\times{\Xi}}\onenorm{{\xi} - {\xi}^\prime}\pi\left(d{\xi},d{\xi}^\prime\right)
\] 
where $\Pi\left(\cdot,\cdot\right)$ denotes the set of all joint distributions supported on $\Xi\times\Xi$ with marginals equal to two given distributions. In (\ref{eq:f}), $x_2\in{\mathbb R}^{n_2}$ and $c_2\in{\mathbb R}^{n_2}$ represent a wait-and-see decision vector and its cost coefficient vector, respectively. The feasible set of $x_2$ is defined as
\[
{\mathcal X}_2\left(x_1,\xi\right):=\left\{x_2\in{\mathbb R}^{n_2}: A^{\rm in}_1 x_1 + A^{\rm in}_2 x_2 + A^{\rm in}_3{\xi} \leq  b^{\rm in}\right\}
\]
where $A^{\rm in}_1\in{\mathbb R}^{L\times n_1}$, $A^{\rm in}_2\in{\mathbb R}^{L\times n_2}$, $A^{\rm in}_3\in{\mathbb R}^{L\times m}$, and $b^{\rm in}\in{\mathbb R}^{L}$. In the above formulations, $m$, $n_{11}$, $n_2$, and $L$ are natural numbers, while $n_{12}$ is a non-negative integer. Throughout the study, we assume that (\ref{eq:problem}) is feasible, as is standard in the literature \cite{bertsimas2012power}. However, we do not impose the (relative) complete recourse condition, which is also usual  (see, e.g., \cite{bansal2018decomposition,byeon2022two}) but might be restrictive for some real-world problems \cite{bertsimas2022two}.

Problem (\ref{eq:problem}) is general enough to model diverse decision-making problems in the real world. For example, the biomass network design \cite{ning2019data}, unmanned aerial vehicle network design \cite{hou2021integrated}, and railway scheduling \cite{liu2022data} problems have been addressed in the form of (\ref{eq:problem}). However, it is often computationally demanding to exactly solve a two-stage optimization problem such as (\ref{eq:problem}) \cite{feige2007robust}. 

In this article, we focus on affine policies that approximately solve (\ref{eq:problem}). Affine policies are a popular solution method for two-stage optimization problems, where wait-and-see decision variables are restricted to be affine functions of uncertainty. Due to their computational efficiency, affine policies have been studied extensively for practical two-stage RO \cite{ben2011robust,lorca2016multistage,kammammettu2019two} and DRO \cite{gourtani2020distributionally,jin2022wasserstein,zhou2020linear} problems. Specifically, we use the affine function 
\[
x^{\rm a}_2\left({\xi}\right):=A{\xi} + a
\]
as our decision rule for $x_2$, where $A\in{\mathbb R}^{n_2\times m}$ and $a\in{\mathbb R}^{n_2}$ are determined simultaneously with $x_1$ at the first stage. Thus, the 2-DRLP of our interest is formulated as 
\begin{equation}\label{eq:appx}
\min_{x_1\in{\mathcal X}_1,\left(A,a\right)\in{\mathcal A}\left(x_1,{\Xi}\right)}c^\top_1 x_1 + h_\Xi\left(A,a\right)
\end{equation}
where
\begin{equation}\label{eq:h}
h_\Xi\left(A,a\right):=\max_{{\mathbb P}\in{\mathcal P}_{\varepsilon}\left({\Xi}\right)}{\mathbb E}_{{\mathbb P}}\left[c^\top_2\left(A\xi+a\right)\right]
\end{equation}
denotes the worst-case expected cost of wait-and-see decisions using the affine policy over ${\mathcal P}_{\varepsilon}\left(\Xi\right)$. To guarantee that $x^{\rm a}_2$ is feasible over $\Xi$, we define 
\[
\begin{aligned}
&{\mathcal A}\left(x_1,\Xi\right):=\left\{\left(A,a\right)\in{\mathbb R}^{n_2\times m}\times{\mathbb R}^{n_2}: A^{\rm in}_1 x_1 + A^{\rm in}_2\left(A{\xi} + a\right) + A^{\rm in}_3{\xi} \leq b^{\rm in}\quad \forall {\xi}\in\Xi\right\}.
\end{aligned}
\]
In this study, we assume that (\ref{eq:appx}) is feasible.\footnote{Unless $m=1$, however, (\ref{eq:appx}) might be infeasible even when (\ref{eq:problem}) is feasible \cite{bertsimas2012power,chen2008linear}. 
In this case, the following discussions throughout the article do not apply.}

One  reason (\ref{eq:problem}) is hard to solve in practice is its scalability issue regarding sample size. Intractable in the current form due to the nested infinite-dimensional optimization problem, (\ref{eq:problem}) can be rewritten in a tractable form using well-studied Wasserstein DRO techniques. However, the scale of any tractable reformulation of (\ref{eq:problem}) grows with sample size. We present such a tractable reformulation in the following proposition, which can be proven by duality theory; see, e.g., \cite{esfahani2018data,zhao2018data}. 
\begin{proposition}\label{prop:exa_reform}
Problem (\ref{eq:problem}) can be rewritten as the two-stage RO problem
\begin{equation}\label{eq:exa_reform}
\begin{aligned}
\min_{x_1\in{\mathcal X}_1, \lambda\geq0, \eta\in{\mathbb R}^N} \quad &c^\top_1 x_1+\lambda\varepsilon + \frac{1}{N}\sum_{i\in{\mathcal I}}\left[\eta\right]_i\\
\text{s.t.} \quad &f\left(x_1,{\xi}\right) - \lambda\onenorm{\xi-{\xi}_i}\leq\left[\eta\right]_i\quad\forall {\xi}\in{\Xi}, i\in{\mathcal I}. 
\end{aligned}
\end{equation}
\end{proposition}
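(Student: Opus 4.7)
The plan is to apply strong duality to the inner worst-case expectation and then introduce epigraph variables. First, I would rewrite $\max_{\mathbb{P}\in\mathcal{P}_\varepsilon(\Xi)}\mathbb{E}_{\mathbb{P}}[f(x_1,\xi)]$ in lifted form as a maximization over joint distributions $\pi$ on $\Xi\times\Xi$ whose second marginal equals the empirical distribution $\mathbb{P}_{\rm e}=\frac{1}{N}\sum_{i\in\mathcal{I}}\delta_{\xi_i}$. Because $\mathbb{P}_{\rm e}$ is discrete, $\pi$ decomposes as $\pi=\frac{1}{N}\sum_{i\in\mathcal{I}}\mathbb{Q}_i\otimes\delta_{\xi_i}$ for conditional probability measures $\mathbb{Q}_i$ on $\Xi$, so the Wasserstein-ball constraint collapses to the single scalar inequality $\frac{1}{N}\sum_{i\in\mathcal{I}}\int_\Xi\onenorm{\xi-\xi_i}\,\mathbb{Q}_i(d\xi)\leq\varepsilon$, while the objective becomes $\frac{1}{N}\sum_{i\in\mathcal{I}}\int_\Xi f(x_1,\xi)\,\mathbb{Q}_i(d\xi)$.

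Second, I would dualize this single scalar transport constraint with a multiplier $\lambda\geq 0$ and interchange the maximization over $\{\mathbb{Q}_i\}$ with the minimization over $\lambda$. Since the integrands then decouple across $i$, and the supremum over probability measures on $\Xi$ of an integral is attained by a Dirac mass placed at a maximizer of the integrand, the dual evaluates to
\[
\min_{\lambda\geq 0}\;\lambda\varepsilon + \frac{1}{N}\sum_{i\in\mathcal{I}}\sup_{\xi\in\Xi}\bigl[f(x_1,\xi)-\lambda\onenorm{\xi-\xi_i}\bigr].
\]
Introducing an epigraph variable $[\eta]_i$ for each inner supremum replaces it with the semi-infinite constraint $f(x_1,\xi)-\lambda\onenorm{\xi-\xi_i}\leq [\eta]_i$ for every $\xi\in\Xi$, and coupling this expression with the outer minimization over $x_1\in\mathcal{X}_1$ yields exactly (\ref{eq:exa_reform}).

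The main obstacle is justifying the zero duality gap in the step that dualizes the scalar transport constraint. The value function $f(x_1,\cdot)$ is the optimal value of an LP in which $\xi$ enters only through the right-hand side, so on $\Xi$ it is the pointwise maximum of finitely many affine functions of $\xi$ and is therefore convex, continuous, and bounded on the compact box $\Xi$ whenever the standing feasibility assumption on (\ref{eq:problem}) holds. These regularity properties place the inner problem within the scope of the generalized moment-problem duality results in \cite[Thm.~4.2]{esfahani2018data} and \cite{zhao2018data}, which certify the strong duality needed to close the argument.
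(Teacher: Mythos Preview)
Your proposal is correct and follows exactly the approach the paper has in mind: the paper does not give its own detailed proof but simply writes ``which can be proven by duality theory; see, e.g., \cite{esfahani2018data,zhao2018data},'' and your argument is precisely the Esfahani--Kuhn/Zhao--Guan strong duality derivation, including the correct observation that $f(x_1,\cdot)$ is convex and continuous on the compact box $\Xi$ under the standing feasibility assumption, so Theorem~4.2 of \cite{esfahani2018data} applies.
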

Problem (\ref{eq:exa_reform}) can be solved using decomposition algorithms such as Benders decomposition, the C\&CG algorithm and variants of these  methods \cite{gamboa2021decomposition}. 
In these algorithms, (\ref{eq:exa_reform}) is decomposed into a master problem and two types of subproblems that are iteratively solved. Each of the master problem and subproblems is written as an MILP. The scalability issue regarding sample size is problematic specifically for the following two reasons. First, one of the two subproblems, which has a size independent of sample size, has to be solved for each sample at each iteration. 
%Thus, the complexity of a decomposition algorithm for (\ref{eq:exa_reform}) is at least linear in sample size. 
Second, a set of decision variables and/or constraints, the number of which is proportional to sample size, can be added to the master problem at each iteration. As empirically shown  in \cite{till2003empirical}, this may well cause the actual computation time of the master problem to increase superlinearly with sample size. Moreover, undoubtedly, the master problem with a scale increasing with sample size makes a decomposition algorithm for (\ref{eq:exa_reform}) susceptible to memory-outage errors when many samples are used.

Considering the superior tractability of affine policies, one natural question arises: {\it Does (\ref{eq:appx}) suffer from the same scalability issue regarding sample size as (\ref{eq:problem})}? In the following section, we show that the answer is no, i.e., (\ref{eq:appx}) has a tractable reformulation with a scale independent of sample size. 

\begin{remark}\label{rem:ff}
The feasibility of (\ref{eq:problem}) implies that any feasible point $x_1$ should be such that ${\mathcal X}_2\left(x_1,{\xi}\right)$ is non-empty for any ${\xi}\in{\Xi}$, i.e.,  
\begin{equation}\label{eq:ff}
f^{\rm f}\left(x_1,{\xi}\right)=0,\quad\forall{\xi}\in{\Xi}
\end{equation}
where $f^{\rm f}\left(x_1,{\xi}\right)$ is equal to the optimal value of the LP 
\begin{equation}\label{eq:ffproblem} 
\begin{aligned}
\min_{x_2\in{\mathbb R}^{n_2}, y\in{\mathbb R}_+}\quad &y \\
\text{s.t.}\quad &A^{\rm in}_1x_1 + A^{\rm in}_2x_2 + A^{\rm in}_3{\xi} \leq b^{\rm in} + I_k y.
\end{aligned}
\end{equation}
In words, $f^{\rm f}\left(x_1,{\xi}\right)$ denotes the maximum violation of constraints in (\ref{eq:f}). By taking the dual formulation of (\ref{eq:ffproblem}), we observe that $f^{\rm f}\left(x_1,{\xi}\right)$ is convex in $\xi$ for a fixed $x_1$. Thus, (\ref{eq:ff}) is rewritten as 
\begin{equation}\label{eq:ff2} 
f^{\rm f}\left(x_1,{\xi}\right)=0,\quad\forall{\xi}\in{\mathcal V}\left(\Xi\right).
\end{equation}
We make explicit use of (\ref{eq:ff2}) in Section \ref{sec:support}, where we construct a Wasserstein ball different from ${\mathcal P}_{\varepsilon}\left(\Xi\right)$ and (\ref{eq:ff}) may not be implied. 
\end{remark}

\begin{remark}
Problem (\ref{eq:appx}) can also express a ``multi-stage" DRLP over 1-Wasserstein balls using an affine policy. Specifically, we consider the multi-stage DRLP 
\begin{equation}\label{eq:multi}
\begin{aligned}
&\min_{x_1\in{\mathcal X}_1} c^\top_1x_1
+
\max_{{\mathbb P}^2\in{\mathcal P}^2_\varepsilon\left(\Xi^2\right)}
{\mathbb E}_{{\mathbb P}^2}
\bigg[\min_{z_2\in{\mathcal Z}_2\left(x_1,\xi^2\right)}
e^\top_2 z_2 +\max_{{\mathbb P}^3\in{\mathcal P}^3_\varepsilon\left(\Xi^3\right)}{\mathbb E}_{\mathbb P^3}\bigg[\min_{z_3\in{\mathcal Z}_3\left(x_1,z_2,\xi^2,\xi^3\right)} e^\top_3 z_3 + \cdots \\
&\ \ +\max_{{\mathbb P}^T\in{\mathcal P}^T_\varepsilon\left(\Xi^T\right)}
{\mathbb E}_{\mathbb P^T}\bigg[\min_{z_T\in{\mathcal Z}_T\left(x_1,z_2,\ldots,z_{T-1},\xi^2,\ldots,\xi^T\right)} e^\top_T z_T
\bigg]\bigg]
\bigg]
\end{aligned}
\end{equation}
where $\xi^t$, $\Xi^t$ and ${\mathcal P}^t_\varepsilon\left(\Xi^t\right)$ denote a random vector, its rectangular support, and a 1-Wasserstein ball for each stage $t=2,\ldots,T$, respectively. Furthermore, $z_t$, ${\mathcal Z}_t$, and $e_t$ denote a real decision vector, its feasible set defined using a finite number of linear inequalities with right-hand-side uncertainty, and its cost coefficient vector for each stage $t$, respectively. Using the affine function 
\[
z^{\rm a}_t\left(\xi^2,\ldots,\xi^t\right) := \sum_{\tau=2}^t A^\tau \xi^\tau + a^\tau
\]
as a decision rule for $z_t$ in (\ref{eq:multi}), which depends on the realization of uncertainty only up to stage $t$, we can formulate a multi-stage problem in the form of (\ref{eq:appx}) for $\xi=\left(\xi^2,\ldots,\xi^T\right)$. Here, the matrices $A^\tau$ and vectors $a^\tau$ to be determined at the first stage are of appropriate dimensions. However, it is unclear if affine policies for multi-stage DRLPs over Wasserstein balls with different assumptions and problem structures, possibly of greater practical importance, lead to (\ref{eq:appx}) in a similar way. Thus, we focus on the two-stage formulation (\ref{eq:problem}) in this article. For details on general multi-stage DRO or distributionally robust dynamic programming problems, the reader is referred to, for example, \cite{Duque2020,yang2020wasserstein,zhang2022distributionally,rahimian2022effective}. 
\end{remark}

\section{Independence of Sample Size}\label{sec:appx}
Similar to (\ref{eq:problem}), (\ref{eq:appx}) is intractable in the current form as (\ref{eq:h}) is infinite-dimensional. In this section, we prove that (\ref{eq:appx}) has a tractable reformulation with a scale independent of sample size. In particular, we derive a finite-dimensional MILP equivalent to (\ref{eq:appx}), the scale of which is invariant with sample size. Subsequently, we present a cutting-plane algorithm for solving the reformulated problem. To this end, we first prove the following theorem. 
\begin{theorem}
Problem (\ref{eq:h}) is rewritten as an LP with a scale independent of $N$. Specifically, we have 
\begin{align}
h_\Xi\left(A,a\right)=
\max_{\tilde{q}^+,\tilde{q}^-\in{\mathbb R}^m_+} \quad 
&c^\top _2\left\{A\left(\tilde{\xi} + \tilde{q}^+ - \tilde{q}^-\right)+a\right\}\nonumber\\
\text{s.t.} \quad &1^\top_m\left(\tilde{q}^++\tilde{q}^-\right)\leq \varepsilon\label{eq:h_con1}\\
& \tilde{q}^+ \leq \overline{\xi} - \tilde{\xi}\label{eq:h_con2}\\
&  \tilde{q}^- \leq \tilde{\xi} - \underline{\xi}\label{eq:h_con3}
\end{align}
where $\tilde{\xi}:=\frac{1}{N}\sum_{i\in{\mathcal I}} {\xi}_i$. 
\end{theorem}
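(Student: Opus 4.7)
The plan is to collapse the infinite-dimensional worst-case expectation in (\ref{eq:h}) to a small LP by exploiting the linearity of the integrand in $\xi$ and then aggregating the per-sample transport variables. I will proceed in three steps. First, I will use the primal characterization of the Wasserstein distance. Because $\mathbb{P}_{\rm e}=\frac{1}{N}\sum_{i\in{\mathcal I}}\delta_{\xi_i}$ is discrete, any transport plan from $\mathbb{P}_{\rm e}$ to a target $\mathbb{P}\in\mathcal{P}(\Xi)$ disintegrates into conditional distributions $\mathbb{P}_i\in\mathcal{P}(\Xi)$ with $\mathbb{P}=\frac{1}{N}\sum_{i\in{\mathcal I}}\mathbb{P}_i$ and $d(\mathbb{P},\mathbb{P}_{\rm e})=\frac{1}{N}\sum_{i\in{\mathcal I}}\int_\Xi \onenorm{\xi-\xi_i}\,d\mathbb{P}_i(\xi)$. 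Since $c_2^\top(A\xi+a)$ is affine in $\xi$ and $\onenorm{\cdot-\xi_i}$ is convex, replacing each $\mathbb{P}_i$ by a Dirac at its mean $\xi_i':=\mathbb{E}_{\mathbb{P}_i}[\xi]\in\Xi$ leaves the objective unchanged while, by Jensen's inequality, weakly tightening the Wasserstein budget. Hence the supremum in (\ref{eq:h}) is attained at a distribution of the form $\frac{1}{N}\sum_{i\in{\mathcal I}}\delta_{\xi_i'}$, which reduces (\ref{eq:h}) to a finite LP over the perturbed scenarios $\xi_i'\in\Xi$ with budget $\frac{1}{N}\sum_{i\in{\mathcal I}}\onenorm{\xi_i'-\xi_i}\leq\varepsilon$.

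Next, I will linearize the 1-norm by writing $\xi_i'-\xi_i=q_i^+-q_i^-$ with $q_i^\pm\geq 0$, using the standard identity $\onenorm{\xi_i'-\xi_i}=\min\,1_m^\top(q_i^++q_i^-)$ over such decompositions. The membership $\xi_i'\in\Xi$ becomes $q_i^+\leq\overline{\xi}-\xi_i$ and $q_i^-\leq\xi_i-\underline{\xi}$. The resulting LP has objective $c_2^\top\{A(\tilde{\xi}+\tilde{q}^+-\tilde{q}^-)+a\}$ with $\tilde{q}^\pm:=\frac{1}{N}\sum_{i\in{\mathcal I}}q_i^\pm$, and budget $\frac{1}{N}\sum_{i\in{\mathcal I}}1_m^\top(q_i^++q_i^-)=1_m^\top(\tilde{q}^++\tilde{q}^-)\leq\varepsilon$. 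Crucially, both the objective and the budget depend on the $q_i^\pm$ only through their averages $\tilde{q}^\pm$, which is exactly where the affine policy (as opposed to a general $f(x_1,\xi)$) pays off. Averaging the per-sample box bounds immediately yields (\ref{eq:h_con2}) and (\ref{eq:h_con3}), so the aggregated LP is an upper bound on the per-sample LP.

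The main obstacle will be the converse, namely showing that every $(\tilde{q}^+,\tilde{q}^-)$ feasible for (\ref{eq:h_con1})--(\ref{eq:h_con3}) can be disaggregated into per-sample $(q_i^+,q_i^-)$ satisfying the original box and nonnegativity constraints with the prescribed averages. I will argue coordinatewise: for each component $j$, the question reduces to expressing the scalar $[\tilde{q}^+]_j$ as the arithmetic mean of $N$ nonnegative numbers whose $i$th entry is bounded above by $[\overline{\xi}-\xi_i]_j$, which is possible precisely because $[\tilde{q}^+]_j\leq[\overline{\xi}-\tilde{\xi}]_j=\frac{1}{N}\sum_{i\in{\mathcal I}}[\overline{\xi}-\xi_i]_j$ by (\ref{eq:h_con2}). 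An explicit witness is the proportional allocation $[q_i^+]_j=[\tilde{q}^+]_j\,[\overline{\xi}-\xi_i]_j/[\overline{\xi}-\tilde{\xi}]_j$, with the degenerate denominator handled by setting the corresponding entries to zero (consistent since then $[\tilde{q}^+]_j=0$ as well); the construction for $q_i^-$ is analogous. Combining the two inequalities yields equality of the two LP values and produces a reformulation of (\ref{eq:h}) in the $2m$ variables $\tilde{q}^\pm$ only, whose scale is manifestly independent of $N$.
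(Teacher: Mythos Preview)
Your proposal is correct and follows the same overall architecture as the paper---reduce the worst-case expectation to a per-sample LP, linearize the $1$-norm, then aggregate---but differs in two technical choices that are worth noting. First, to obtain the per-sample LP, the paper invokes Theorem~4.4 of \cite{esfahani2018data} as a black box, whereas you give a self-contained primal argument: disintegrate the coupling along the discrete marginal $\mathbb{P}_{\rm e}$, then use Jensen's inequality together with the affinity of the integrand to collapse each conditional $\mathbb{P}_i$ to the Dirac at its mean. This is more elementary and makes transparent exactly where linearity of $c_2^\top(A\xi+a)$ is used. Second, for the disaggregation step, the paper exhibits a greedy sequential allocation (fill each sample's box in order until the aggregate is exhausted), while you use the proportional allocation $[q_i^+]_j=[\tilde{q}^+]_j\,[\overline{\xi}-\xi_i]_j/[\overline{\xi}-\tilde{\xi}]_j$; both are valid, and yours is arguably cleaner to verify. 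One small point you gloss over: when you pass from the membership constraint $\underline{\xi}\leq\xi_i+q_i^+-q_i^-\leq\overline{\xi}$ to the separated bounds $q_i^+\leq\overline{\xi}-\xi_i$ and $q_i^-\leq\xi_i-\underline{\xi}$, this tightens the feasible set of $(q_i^+,q_i^-)$; the equivalence of optimal values follows because one can always replace any feasible pair by its complementary (entrywise minimal) decomposition without changing $q_i^+-q_i^-$ or increasing the budget. The paper handles this by initially imposing $q_i^+\circ q_i^-=0_m$ and then observing it is redundant.
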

\begin{proof}
Let 
\begin{equation*}
\overline{h}_\Xi\left(A\right):=
\max_{{\mathbb P}\in{\mathcal P}_{\varepsilon}\left({\Xi}\right)}{\mathbb E}_{{\mathbb P}}
\left[c^\top_2{A{\xi}}\right].
\end{equation*}
From Theorem 4.4 in \cite{esfahani2018data}, it follows that 
\begin{align}
\overline{h}_\Xi\left(A\right)=
\max_{q\in{\mathbb R}^{Nm}} \quad &\frac{1}{N}\sum_{i\in{\mathcal I}} c^\top _2A\left({\xi}_i + q_i\right)\nonumber\\
\text{s.t.} \quad &\frac{1}{N}\sum_{i\in{\mathcal I}} \onenorm{q_{i}}\leq \varepsilon\label{eq:normconstraint}\\
& \underline{\xi}\leq {\xi}_i + q_i\leq \overline{\xi} \quad\forall i\in{\mathcal I}\nonumber
\end{align}
where $q:=\left(q_1,\ldots,q_N\right)$ is a vector concatenating $q_i\in{\mathbb R}^m_+$ for all $i\in{\mathcal I}$. Introducing auxiliary decision vectors $q^{+}_i, q^-_i\in{\mathbb R}^m_+$ such that $q_i=q^{+}_i - q^-_i$ and $q^+_i\circ q^-_i=0_m$ for each $i\in{\mathcal I}$ to linearize the norm constraint (\ref{eq:normconstraint}), we observe that 
\begin{align}
\overline{h}_\Xi\left(A\right)=
 \max_{q^+,q^-\in{\mathbb R}^{Nm}_+}
\quad &\frac{1}{N}\sum_{i\in{\mathcal I}} c^\top _2A\left({\xi}_i + q^+_i - q^-_i\right)\nonumber\\
\text{s.t.} \quad &\frac{1}{N}\sum_{i\in{\mathcal I}}1^\top_m\left(q^+_{i}+q^-_i\right)\leq \varepsilon \label{eq:normconstraintlinear}\\
& q^+_i \leq \overline{\xi} - {\xi}_i \quad\forall i\in{\mathcal I}\label{eq:ineqs_plus}\\
& q^-_i\leq {\xi}_i - \underline{\xi}\quad\forall i\in{\mathcal I}\label{eq:ineqs_minus}\\
& q^+_i\circ{q}^-_i={0}_m\quad \forall i\in{\mathcal I}\label{eq:mutual}
\end{align}
where $q^+:=\left(q^+_1,\ldots,q^+_N\right)$ and ${q}^-:=\left({q}^-_1,\ldots,{q}^-_N\right)$. Note that the mutual exclusivity constraint (\ref{eq:mutual}) is redundant and thus can be omitted without affecting optimality. Adding up the $N$ inequalities in (\ref{eq:ineqs_plus}) and those in (\ref{eq:ineqs_minus}) respectively, we further have 
\begin{align}
\overline{h}_\Xi\left(A\right)\leq
\max_{{q}^+,{q}^-\in{\mathbb R}^{Nm}_+} \quad & \frac{1}{N}\sum_{i\in{\mathcal I}} c^\top _2A\left({\xi}_i + q^+_i - {q}^-_i\right)\nonumber\\
\text{s.t.}\quad &\text{(\ref{eq:normconstraintlinear})}\nonumber\\
& \sum_{i\in{\mathcal I}}{q}^+_i\leq N\overline{\xi} - \sum_{i\in{\mathcal I}}{\xi}_i \label{eq:ineqs_plus_add}\\
& \sum_{i\in{\mathcal I}}{q}^-_i \leq \sum_{i\in{\mathcal I}}{\xi}_i-N\underline{\xi}. \label{eq:ineqs_minus_add}
\end{align}
In what follows, we show that this holds as equality. For any ${q}^{+\prime}=\left({q}^{+\prime}_1,\ldots,{q}^{+\prime}_N\right)\in\left\{{q}^+\in{\mathbb R}^{Nm}_+:\text{(\ref{eq:ineqs_plus_add})}\right\}$, there exists ${q}^{+\prime\prime}=\left({q}^{+\prime\prime}_1,\ldots,{q}^{+\prime\prime}_N\right)\in\left\{{q}^+\in{\mathbb R}^{Nm}_+:\text{(\ref{eq:ineqs_plus})}\right\}$ such that $\sum_{i\in{\mathcal I}}{q}^{+\prime\prime}_i = \sum_{i\in{\mathcal I}}{q}^{+\prime}_i$. For example, one such ${q}^{+\prime\prime}$ can be obtained by letting 
\[
{q}^{+\prime\prime}_{i} :=
\begin{cases}
\begin{aligned}
&\min\left\{\overline{\xi} - {\xi}_i,\sum_{j\in{\mathcal I}} {q}^{+\prime}_j \right\} && i=1\\
&\min\left\{\overline{\xi} - {\xi}_{i},\sum_{i\in{\mathcal I}} {q}^{+\prime}_i - \sum_{j<i}{q}^{+\prime}_{j}\right\}&&i\geq2,
\end{aligned}
\end{cases}
\]
where the minimum is taken entrywisely. For any $q^{-\prime}=\left({q}^{-\prime}_1,\ldots,{q}^{-\prime}_N\right)\in\left\{{q}^+\in{\mathbb R}^{Nm}_+:\text{(\ref{eq:ineqs_minus_add})}\right\}$, similarly, we have at least one ${q}^{+\prime\prime}=\left({q}^{-\prime\prime}_1,\ldots,{q}^{-\prime\prime}_N\right)\in\left\{{q}^-\in{\mathbb R}^{Nm}_+:\text{(\ref{eq:ineqs_minus})}\right\}$ such that $\sum_{i\in{\mathcal I}}{q}^{-\prime\prime}_i = \sum_{i\in{\mathcal I}}{q}^{-\prime}_i$. Thus, it holds that 
\begin{align}
\overline{h}_\Xi\left(A\right)=
\max_{{q}^+,{q}^-\in{\mathbb R}^{Nm}_+} \quad &\frac{1}{N}\sum_{i\in{\mathcal I}} {c}^\top _2A\left({\xi}_i + {q}^+_i - {q}^-_i\right)\nonumber\\
\text{s.t.}\quad &\text{(\ref{eq:normconstraintlinear}), (\ref{eq:ineqs_plus_add}), (\ref{eq:ineqs_minus_add}). }\nonumber
\end{align}
By adding $c^\top_2{a}$ to both sides and letting $\tilde{q}^+:=\sum_{i\in{\mathcal I}}{q}^+_i$ and $\tilde{q}^-:=\sum_{i\in{\mathcal I}}{q}^-_i$, we prove the statement. 
\qed
\end{proof}
Based on duality in LPs, we have 
\[
\begin{aligned}
h_\Xi\left({A},a\right)=\min_{\mu\in{\mathcal M}\left(A\right)} {c}_{3,\Xi}^\top {\mu}+c_2^\top\left(A\tilde{\xi}+a\right)
\end{aligned}
\]
where ${\mu}:=\left(\mu^0,\mu^+,\mu^-\right)\in{\mathbb R}^{2m+1}_+$ with $\mu^0\in{\mathbb R}_+$, $\mu^+\in{\mathbb R}^{m}_+$, and $\mu^-\in{\mathbb R}^{m}_+$ denote the dual decision variable and vectors associated with constraints (\ref{eq:h_con1})--(\ref{eq:h_con3}), respectively, ${c}_{3,\Xi}:=(\varepsilon,\overline{\xi}-\tilde{\xi}, \tilde{\xi}-\underline{\xi})\in{\mathbb R}^{2m+1}$, and 
\[
\begin{aligned}
&{\mathcal M}\left(A\right):=\left\{{\mu}\in{\mathbb R}^{2m+1}_+:
\begin{bmatrix}
1_m & I_m & O_m \\
1_m & O_m & I_m
\end{bmatrix}
{\mu}\geq
\begin{bmatrix}
A^\top c_2\\
-A^\top c_2
\end{bmatrix}
\right\}.
\end{aligned}
\]
Thus, (\ref{eq:appx}) is rewritten as the semi-infinite MILP 
\begin{equation}\label{eq:appx_re1}
\min_{\substack{{x}_1\in{\mathcal X}_1,
\left(A,{a}\right)\in{\mathcal A}\left({x}_1,{\Xi}\right)\\
{\mu}\in{\mathcal M}\left(A\right)}}
{c}^\top_1{x}_1 + {c}^\top_2\left(A\tilde{\xi}+{a}\right)+{c}^\top_{3,\Xi}{\mu}.
\end{equation}
Problem (\ref{eq:appx_re1}) is semi-infinite as ${\mathcal A}\left({x}_1,\Xi\right)$ is defined with an infinite number of inequalities. Since the inequalities are linear in ${\xi}$ for a fixed $\left(A,{a}\right)$, it can be replaced with 
\[
\begin{aligned}
&{\mathcal A}^{\rm v}\left({x}_1,\Xi\right):=
\left\{\left(A,{a}\right)\in{\mathbb R}^{n_2\times m}\times{\mathbb R}^{n_2}: A^{\rm in}_1{x}_1 + A^{\rm in}_2\left(A{\xi} + {a}\right) + A^{\rm in}_{3}{\xi} \leq {b}^{\rm in}\quad \forall {\xi}\in{\mathcal V}\left(\Xi\right)\right\}.
\end{aligned}
\]
Thus, (\ref{eq:appx_re1}) is rewritten as the finite-dimensional MILP
\begin{equation}\label{eq:appx_re2}
\min_{\substack{{x}_1\in{\mathcal X}_1,\left(A,{a}\right)\in{\mathcal A}^{\rm v}\left({x}_1,{\Xi}\right),\\
{\mu}\in{\mathcal M}\left(A\right)}}{c}^\top_1{x}_1 + {c}^\top_2\left(A\tilde{\xi}+{a}\right)+{c}^\top_{3,\Xi}{\mu}.
\end{equation}
However, (\ref{eq:appx_re2}) is still hard to handle using off-the-shelf MILP solvers due to the large number $\lvert{\mathcal V}\left(\Xi\right)\rvert L=2^mL$ of linear inequalities defining ${\mathcal A}^{\rm v}\left({x}_1,{\Xi}\right)$, which may incur timeout or memory-outage errors. To avoid these errors, we solve (\ref{eq:appx_re2}) using a cutting-plane algorithm, assuming that an off-the-shelf MILP solver is available.

The algorithm for (\ref{eq:appx_re2}) is described as follows. For initialization, we select any ${\xi} _{l1}\in{\mathcal V}\left(\Xi\right)$ and let ${\Xi}^{\rm v}_{l1}:=\left\{{\xi}_{l1}\right\}$ for each $l\in{\mathcal L}:=\left\{1,\ldots,L\right\}$. At each iteration $P\geq 1$, we solve the master problem
\begin{equation}\label{eq:master}
\min_{\substack{{x}_1\in{\mathcal X}_1,
\left(A,{a}\right)\in{\mathcal A}^{\rm v}_P\left({x}_1,{\Xi}^{\rm v}_P\right),\\
{\mu}\in{\mathcal M}\left(A\right)
}}
{c}^\top_1{x}_1 + {c}^\top_2\left(A\tilde{\xi}+{a}\right) + {c}^\top_{3,\Xi}{\mu}
\end{equation}
where $\Xi^{\rm v}_{P}:=\left(\Xi^{\rm v}_{1P},\ldots,\Xi^{\rm v}_{LP}\right)$ and
\[
\begin{aligned}
&{\mathcal A}^{\rm v}_P\left(x_1,\Xi^{\rm v}_P\right):= \{\left(A,a\right)\in{\mathbb R}^{n_2\times m}\times{\mathbb R}^{n_2}: 
\left.\left[A^{\rm in}_1x_1 + A^{\rm in}_2\left(A{\xi} + a\right)+A^{\rm in}_3{\xi}\right]_l \leq \left[b^{\rm in}\right]_l \forall {\xi}\in\Xi^{\rm v}_{lP},l\in{\mathcal L}\right\}.
\end{aligned}
\]
Problem (\ref{eq:master}) is an MILP. Let $\left(x_{1P},A_P,a_P\right)$ and $L_P$ denote the solution corresponding to $\left(x_{1},A,a\right)$ and the optimal value of (\ref{eq:master}), respectively. Subsequently, for each $l\in{\mathcal L}$, we solve the subproblem 
\begin{equation}\label{eq:sub}
\max_{{\xi}\in{\Xi}}\left[A^{\rm in}_1{x}_1 + A^{\rm in}_2\left(A_P{\xi} + {a}_P\right) + A^{\rm in}_3{\xi} - {b}^{\rm in}\right]_l
\end{equation}
which is an LP. Let ${\xi}_{lP}$ and $F_{lP}$ denote the solution and optimal value of (\ref{eq:sub}), respectively. We assume that (\ref{eq:sub}) is solved by a simplex method such that ${\xi}_{lP}\in{\mathcal V}\left(\Xi\right)$. If $F_{lP}$ is greater than a pre-defined feasibility tolerance $\rho\geq0$, it is implied that the constraint 
\[
\left[A^{\rm in}_1{x}_1 + A^{\rm in}_2\left(A{\xi}_{lP} + {a}\right) + A^{\rm in}_3{\xi}_{lP}\right]_l\leq\left[{b}^{\rm in}\right]_l
\]
in (\ref{eq:appx_re2}) is violated. We let ${\Xi}^{\rm v}_{l(P+1)}:={\Xi}^{\rm v}_{lP}\cup\left\{{\xi}_{lP}\right\}$ in this case and ${\Xi}^{\rm v}_{l(P+1)}:={\Xi}^{\rm v}_{lP}$ otherwise. If $F_P:=\max_{l\in{\mathcal L}}F_{lP}$ is no greater than $\rho$, it is implied that all the constraints in (\ref{eq:appx_re2}) are met. Thus, the algorithm stops and $\left({x}^\ast_{1},A^\ast,{a}^\ast\right)=\left({x}_{1P},A_{P},{a}_{P}\right)$ is returned as a solution to (\ref{eq:appx}). Otherwise, the iteration step increases and we solve (\ref{eq:master}) again. Problem (\ref{eq:master}) is a relaxation of (\ref{eq:appx}) for any iteration step $P$ such that $F_P>0$. Thus, $L_P$ monotonically converges to the optimal value of (\ref{eq:appx}). Moreover, as $\lvert{\mathcal V}\left(\Xi\right)\rvert<\infty$, the algorithm yields a solution optimal within the optimality tolerance of the off-the-shelf MILP solver in finitely many iterations. We provide a pseudocode of the algorithm in Algorithm \ref{alg:appx_re2}. 

\begin{algorithm}[t!]
\caption{Algorithm for (\ref{eq:appx})}\label{alg:appx_re2}
\begin{algorithmic}
\Require Feasibility tolerance $\rho\geq0$, any ${\xi}_{l1}\in{\mathcal V}\left(\Xi\right)$ for each $l\in{\mathcal L}$
\Ensure Solution $\left({x}_1^\ast,A^{\ast},{a}^{\ast}\right)$ to (\ref{eq:appx})
\For{$l\gets 1$ to $L$}
\State{$\Xi^{\rm v}_{l1}\gets\left\{{\xi}_{l1}\right\}$}
\EndFor
\State{$F_1\gets \infty$, $P\gets 1$}
\While{$F_P>\rho$}
\State{Solve (\ref{eq:master}), 
$\left({x}_1^\ast,A^{\ast},{a}^{\ast}\right)\gets
\left({x}_{1P},A_P,{a}_P\right)$}
\For{$l\gets 1$ to $L$}
\State{Solve (\ref{eq:sub})}
\If{$F_{lP}>\rho$}
\State{$\Xi^{\rm v}_{l(P+1)}\gets\Xi^{\rm v}_{lP}\cup\left\{{\xi}_{lP}\right\}$}
\Else
\State{$\Xi^{\rm v}_{l(P+1)}\gets\Xi^{\rm v}_{lP}$}
\EndIf
\EndFor
\State{$F_{P+1}\gets\max_l F_{lP}$, $P\gets P+1$}
\EndWhile
\end{algorithmic}
\end{algorithm}

Note that we do not actually use the optimized affine policy $x^{\rm a}_2\left(\xi\right)=A^\ast{\xi}+a^\ast$ in any decision-making stage. Rather, we enjoy only the computational tractability of affine policies when determining  $x_1$ in the first stage. In the second stage, we do not rely on the affine policy to determine ${x}_2$ as it may be overly conservative. Instead, we solve (\ref{eq:f}) for ${x}_1={x}^\ast_1$ to determine ${x}_2$, the feasibility of which for any $\xi\in\Xi$ is implied by the feasibility of (\ref{eq:appx}). As (\ref{eq:f}) is a standard LP, we can always make more efficient wait-and-see decisions compared to using the affine policy. 

According to \cite{bertsimas2012power}, the optimality gap of (\ref{eq:problem}) and (\ref{eq:appx}) incurred by the affine policy can be arbitrarily large, when $\varepsilon$ is big enough so that (\ref{eq:problem}) is identical to its RO counterpart. This can also be true for any $\varepsilon>0$ as discussed in what follows. We first present the following theorem. 
\begin{theorem}\label{theorem:nondecreasing}
The optimal values of (\ref{eq:problem}) and (\ref{eq:appx}) as a function of $\varepsilon>0$ are piecewise affine and concave. 
\end{theorem}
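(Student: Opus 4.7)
The plan is to show that in both problems the Wasserstein radius $\varepsilon$ appears only linearly in the objective, with a feasible set that is independent of $\varepsilon$. Concavity will then follow at once from a lower-envelope representation, and piecewise affinity will come from the polyhedral structure of the feasible set after conditioning on the binary components of $x_1$.

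For (\ref{eq:problem}), I will apply Proposition \ref{prop:exa_reform} to obtain
\[
V(\varepsilon) \;=\; \inf_{(x_1,\lambda,\eta)\in\mathcal{F}}\Bigl[c_1^\top x_1 + \tfrac{1}{N}\textstyle\sum_{i\in\mathcal{I}}[\eta]_i + \lambda\,\varepsilon\Bigr],
\]
where $\mathcal{F}$ denotes the feasible set of (\ref{eq:exa_reform}) and does not depend on $\varepsilon$. The bracketed quantity is affine in $\varepsilon$ for every feasible triple, so $V$ is a pointwise infimum of affine functions, hence concave on $(0,\infty)$. To obtain piecewise affinity I will represent $\mathcal{F}$ as a finite union of polyhedra. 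Standard Wasserstein-DRO reformulations (see, e.g., \cite{esfahani2018data}) show that the semi-infinite constraint $f(x_1,\xi) - \lambda\onenorm{\xi - {\xi}_i} \leq [\eta]_i$ for all $\xi \in \Xi$ admits a finite polyhedral representation, obtained by writing $f(x_1,\xi)$ as a pointwise maximum of finitely many affine functions of $\xi$ (via LP duality applied to the inner problem (\ref{eq:f})) and then linearizing the $1$-norm over the box $\Xi$. Combined with the finitely many realizations of the binary part of $x_1$, this exhibits $\mathcal{F}$ as a finite union of polyhedra indexed by binary assignments $b \in \{0,1\}^{n_{11}}$. Writing $V(\varepsilon) = \min_b V_b(\varepsilon)$, each $V_b$ is the value of a parametric LP whose sole dependence on $\varepsilon$ is in a single objective coefficient, so parametric LP theory yields that each $V_b$ is concave and piecewise affine in $\varepsilon$ with finitely many breakpoints. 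The minimum over the finitely many $b$ preserves piecewise affinity, and concavity of the overall $V$ has already been established.

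The argument for (\ref{eq:appx}) is entirely analogous but sidesteps the semi-infinite constraint. From the reformulation (\ref{eq:appx_re2}), only the first entry of $c_{3,\Xi}$ equals $\varepsilon$, so the objective takes the form $c_1^\top x_1 + c_2^\top(A\tilde{\xi}+a) + \varepsilon\mu^0 + (\overline{\xi}-\tilde{\xi})^\top\mu^+ + (\tilde{\xi}-\underline{\xi})^\top\mu^-$, and the feasible set of (\ref{eq:appx_re2}) is independent of $\varepsilon$ and, after conditioning on the binary components of $x_1$, is a finite union of polyhedra. The same two-pronged argument---lower envelope for concavity, parametric LP plus finite disjunction for piecewise affinity---then yields the claim.

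The main obstacle I anticipate is the finite polyhedral reformulation of the semi-infinite constraint in (\ref{eq:exa_reform}), which requires careful bookkeeping of dual vertices and of the splitting of the $1$-norm over $\Xi$; everything else is routine parametric LP analysis.
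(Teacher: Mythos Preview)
Your proposal is correct and follows the same high-level strategy as the paper: exhibit the problem as a finite-dimensional MILP in which $\varepsilon$ enters only through a single linear term in the objective, then invoke the fact that the optimal value of such a parametric problem is a pointwise minimum of finitely many affine functions of $\varepsilon$.

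The execution differs in one respect worth noting. To eliminate the semi-infinite constraint in (\ref{eq:exa_reform}), the paper argues directly that, for fixed $x_1$ and $\lambda$, the maximizer of $f(x_1,\xi)-\lambda\onenorm{\xi-\xi_i}$ over $\Xi$ may be taken with each coordinate equal to $[\xi_i]_j$, $[\overline\xi]_j$, or $[\underline\xi]_j$; this yields an explicit finite scenario set (indexed by $\sigma_i^+,\sigma_i^-\in\{0,1\}^m$ with $\sigma_i^++\sigma_i^-\le 1_m$), and the MILP is built in extensive form by attaching a wait-and-see vector to each scenario. Your route instead dualizes twice---once on the inner LP defining $f$ to express it as a finite max of affine functions of $\xi$, and once on the resulting box-constrained concave maximization---to obtain a polyhedral lifting, then appeals to parametric LP theory after fixing the binary part of $x_1$. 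The paper's argument is more self-contained and gives a concrete description of the worst-case scenarios; yours is more modular and avoids the coordinate-wise case analysis, at the cost of leaning on the reformulations in \cite{esfahani2018data} rather than deriving the finite structure from scratch. Either way the conclusion is the same.
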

\begin{proof}
It is enough to address only the optimal value of (\ref{eq:problem}). For any $x_1\in{\mathcal X}_1$ such that (\ref{eq:ff2}) holds, $\lambda\geq0$, and $i\in{\mathcal I}$, we consider the problem
\begin{equation}\label{eq:lemma}
\max_{\xi\in\Xi}f\left(x_1,{\xi}\right) - \lambda\onenorm{\xi-{\xi}_i}.
\end{equation}
Introducing decision vectors $r^+_i,r^-_i\in{\mathbb R}^m_+$ such that $\xi = \xi_i + r^+_i - r^-_i$, (\ref{eq:lemma}) is rewritten as 
\begin{equation}\label{eq:lemmaproof}
\begin{aligned}
&\max_{\substack{(r^+_i,r^-_i)\in{\mathcal R}\left(\sigma\right),\sigma\in\left\{0,1\right\}^m}}
f\left(x_1,{\xi_i+r^+_i-r^-_i}\right) - \lambda\left(r^+_i+r^-_i\right)
\end{aligned}
\end{equation}
where 
\[
\begin{aligned}
&{\mathcal R}\left(\sigma\right):=\left\{\left(r^+_i,r^-_i\right)
\in{\mathbb R}^m_+\times{\mathbb R}^m_+: r^+_i\leq \left(\overline{\xi}-\xi_i\right)\circ \sigma, \ r^-_i\leq \left(\xi_i - \underline{\xi}\right)\circ\left(1_m-\sigma\right)
\right\}.
\end{aligned}
\]
As the dual of (\ref{eq:f}) for $\xi=\xi_i + r^+_i - r^-_i$ is a maximization problem with an objective function that is linear in $(r^+_i,r^-_i)$ for any fixed dual vector, the feasible set ${\mathcal R}\left(\sigma\right)$ of $(r^+_i,r^-_i)$ in (\ref{eq:lemmaproof}) can be replaced with its vertex set. This implies that a solution $\xi$ to (\ref{eq:lemma}) can be assumed to be equal to $\xi_i$, $\overline{\xi}$, or $\underline{\xi}$ in each entry independently. 
Thus, (\ref{eq:lemma}) is rewritten as the integer program
\[
\begin{aligned}
\max_{\sigma^+_{i},\sigma^-_{i}\in\left\{0,1\right\}^{m}}\quad &f\left(x_1,{\xi}\right) - \lambda\onenorm{\xi-{\xi}_i}\\
\text{s.t.} \quad &\xi = \xi_i + \left(\overline{\xi}-\xi_i\right)\circ\sigma^+_i - \left(\xi_i-\underline{\xi}\right)\circ\sigma^-_i\\
&\sigma^+_i+\sigma^-_i\leq 1_m.
\end{aligned}
\]
Based on this equivalence, (\ref{eq:exa_reform}) can be rewritten as a finite-dimensional MILP by introducing wait-and-see decision vectors associated with uncertain scenarios corresponding to each pair of $\sigma^+_i,\sigma^-_i\in\{0,1\}^{m}$ such that $\sigma^+_i+\sigma^-_i\leq1_m$ for each $i\in{\mathcal I}$. Thus, the optimal value of (\ref{eq:problem}) can be considered as a point-wise minimum of finitely many affine functions of $\varepsilon$. Hence the statement holds. 
\qed
\end{proof}

Theorem \ref{theorem:nondecreasing} suggests that the optimality gap between (\ref{eq:problem}) and (\ref{eq:appx}) as a function of $\varepsilon>0$ is a difference of two concave functions, which can be neither increasing nor decreasing in general. As there exists some $\varepsilon$ such that the optimality gap can be arbitrarily large, we assert that this holds for any $\varepsilon>0$. Although conditions under which affine policies for two-stage Wasserstein DRO problems can be optimal are studied in \cite{georghiou2021optimality}, we do not have such special assumptions on (\ref{eq:problem}) as most real-world problems do not satisfy them. To reduce the inevitable conservativeness of the affine policy, in the following section, we build an uncertainty set smaller than $\Xi$ and re-define the Wasserstein ball on it. 

\section{Conservativeness Reduction via Wasserstein Ball Refinement}\label{sec:support}
To reduce the conservativeness of the affine policy, we propose to use a Wasserstein ball ${\mathcal P}_\varepsilon\left(\Omega\right)$ instead of ${\mathcal P}_\varepsilon\left(\Xi\right)$, which is defined on a data-driven uncertainty set $\Omega:={\Xi}\cap{\Xi}^{\rm a}$. Here, we define 
\[
\Xi^{\rm a} := [{\xi}^{\rm l}-\varepsilon\Delta{1}_m,{\xi}^{\rm u}+\varepsilon\Delta{1}_m]
\] 
where ${\xi}^{\rm l}\in{\mathbb R}^m$ and ${\xi}^{\rm u}\in{\mathbb R}^m$ are the entry-wise minimum and maximum vectors of the samples, respectively, i.e., $[{\xi}^{\rm l},{\xi}^{\rm u}]$ is the box hull of the samples. We let $\Delta := \max\left\{N,\beta\right\}$ where $\beta>0$ is a user-defined parameter. 

Built this way, the uncertainty set $\Omega$ is endowed with a probabilistic property stated in the following theorem.
\begin{theorem}\label{theorem1}
The worst-case probability of the realization of $\xi$ being outside $\Omega$ over $\mathcal P_\varepsilon\left({\Xi}\right)$ is bounded by $\Delta^{-1}$, i.e., 
\[
\sup_{{\mathbb P}\in{\mathcal P}_{\varepsilon}\left({\Xi}\right)}{\mathbb P}\left[{\xi}\notin\Omega\right]\leq \Delta^{-1}.
\]
\end{theorem}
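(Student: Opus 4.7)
The plan is to exploit the key geometric observation that every point outside $\Xi^{\rm a}$ is at $1$-norm distance at least $\varepsilon\Delta$ from every sample, combined with a Markov-type inequality applied to any transport plan between $\mathbb{P}$ and $\mathbb{P}_{\rm e}$.

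First, I would reduce the event $\{\xi\notin\Omega\}$ to $\{\xi\notin\Xi^{\rm a}\}$. Since every $\mathbb{P}\in\mathcal{P}_\varepsilon(\Xi)$ is supported on $\Xi$, the event $\{\xi\notin\Omega\}=\{\xi\in\Xi\setminus\Xi^{\rm a}\}$ coincides (up to a $\mathbb{P}$-null set) with $\{\xi\notin\Xi^{\rm a}\}$, so it suffices to bound $\mathbb{P}[\xi\notin\Xi^{\rm a}]$.

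Second, I would establish the distance lower bound. Each sample $\xi_i$ lies in the box hull $[\xi^{\rm l},\xi^{\rm u}]$. If $\xi\notin\Xi^{\rm a}$, then some coordinate $j$ satisfies either $[\xi]_j>[\xi^{\rm u}]_j+\varepsilon\Delta$ or $[\xi]_j<[\xi^{\rm l}]_j-\varepsilon\Delta$, and in either case $\bigl|[\xi]_j-[\xi_i]_j\bigr|\geq\varepsilon\Delta$ for every $i\in\mathcal{I}$. Hence
\[
\onenorm{\xi-\xi_i}\geq\varepsilon\Delta\qquad\forall\,\xi\notin\Xi^{\rm a},\ \forall i\in\mathcal{I}.
\]

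Third, I would invoke the definition of the Wasserstein metric. Fix any $\mathbb{P}\in\mathcal{P}_\varepsilon(\Xi)$ and any coupling $\pi\in\Pi(\mathbb{P},\mathbb{P}_{\rm e})$. Because the second marginal of $\pi$ is $\mathbb{P}_{\rm e}$, which is supported on $\{\xi_1,\ldots,\xi_N\}\subset[\xi^{\rm l},\xi^{\rm u}]$, the lower bound above applies $\pi$-almost surely on the event $\{\xi\notin\Xi^{\rm a}\}\times\Xi$. Therefore
\[
\int_{\Xi\times\Xi}\onenorm{\xi-\xi'}\,\pi(d\xi,d\xi')\ \geq\ \varepsilon\Delta\cdot\pi\bigl[\xi\notin\Xi^{\rm a}\bigr]\ =\ \varepsilon\Delta\cdot\mathbb{P}[\xi\notin\Xi^{\rm a}].
\]
Taking the infimum over $\pi$ gives $\varepsilon\Delta\cdot\mathbb{P}[\xi\notin\Xi^{\rm a}]\leq d(\mathbb{P},\mathbb{P}_{\rm e})\leq\varepsilon$, so $\mathbb{P}[\xi\notin\Xi^{\rm a}]\leq\Delta^{-1}$. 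Finally, taking the supremum over $\mathbb{P}\in\mathcal{P}_\varepsilon(\Xi)$ completes the proof.

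I do not foresee a genuine obstacle: the argument is essentially a one-line Markov-type inequality on a transport plan, and the only subtlety is the crisp geometric fact that the padded box $\Xi^{\rm a}$ is an $\varepsilon\Delta$-neighborhood (in $1$-norm) of the box hull of the data, so that displacing any sample outside $\Xi^{\rm a}$ costs at least $\varepsilon\Delta$ of Wasserstein mass. Care should be taken to verify that the coupling-based Markov inequality holds on the possibly unbounded $\Xi\times\Xi$ without integrability issues, but since $\Xi$ is the bounded box $[\underline{\xi},\overline{\xi}]$, the integrand $\onenorm{\xi-\xi'}$ is uniformly bounded and no technicality arises.
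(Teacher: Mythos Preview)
Your argument is correct and considerably more direct than the paper's. The paper proceeds by introducing an auxiliary enlarged support $\Xi^{\rm u}\supset\Xi\cup\Xi^{\rm a}$, invoking the strong-duality reformulation of Esfahani and Kuhn (their Theorem~4.4 and Corollary~5.3) to express $V:=\sup_{\mathbb{P}\in\mathcal{P}_\varepsilon(\Xi^{\rm u})}\mathbb{P}[\xi\notin(\Xi^{\rm a})^\circ]$ as a finite-dimensional nonconvex program, and then exhibiting an explicit optimal solution of that program to conclude $V=\Delta^{-1}$; finally it chains $V$ down to the quantity of interest through a sequence of monotonicity inequalities. Your route bypasses all of this machinery: the geometric observation that $\Xi^{\rm a}$ is the $\varepsilon\Delta$-enlargement (in the $1$-norm) of the box hull of the samples, together with a one-line Markov bound on any admissible coupling, already yields the inequality. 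What the paper's approach buys is that it actually \emph{computes} $V$ and thereby shows the bound $\Delta^{-1}$ is tight on the enlarged support $\Xi^{\rm u}$; your elementary argument gives only the upper bound, which is all the theorem claims, and it does so without appealing to any external duality results.
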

\begin{proof}
Assume for the proof that $\xi$ has a compact convex support ${\Xi}^{\rm u}$ whose interior contains ${\Xi}\cup{\Xi}^{\rm a}$. According to Theorem 4.4 and Corollary 5.3 in \cite{esfahani2018data},
\[
V := \sup_{{\mathbb P}\in{\mathcal P}_\varepsilon\left({\Xi}^{\rm u}\right)}{\mathbb P}\left[{\xi}\notin\left(\Xi^{\rm a}\right)^\circ\right]
\]
is equal to the optimal value of the problem
\begin{equation}\label{eq:theorembeta}
\begin{aligned}
\sup_{\substack{\alpha\in{\mathbb R}^{N(2N+1)}_+,\\{p}\in{\mathbb R}^{mN(2N+1)}}} \quad &\frac{1}{N}\sum_{i\in{\mathcal I}}\sum_{k\in{\mathcal K}}\alpha_{ik}l_{k}\left({\xi}_i + \frac{{p}_{ik}}{{\alpha_{ik}}}\right)\\
\text{s.t.} \quad &\frac{1}{N}\sum_{i\in{\mathcal I}}\sum_{k\in{\mathcal K}}\onenorm{{p}_{ik}}\leq \varepsilon,\\
&\sum_{k\in{\mathcal K}}\alpha_{ik} = 1\quad \forall i\in{\mathcal I},\\
&{\xi}_i + 
\frac{{p}_{ik}}{{\alpha_{ik}}} \in {\Xi}^{\rm u}\quad \forall k\in{\mathcal K}, i\in{\mathcal I}
\end{aligned}
\end{equation}
where ${\alpha}$ and ${p}$ are a vector of $\alpha_{ik}\in{\mathbb R}_+$ and a vector concatenating ${p}_{ik}\in{\mathbb R}^m$ for all $\left(i,k\right)\in{\mathcal I}\times{\mathcal K}$ with ${\mathcal K}:=\left\{1,2,\ldots,2N+1\right\}$, respectively. Moreover, we define 
\[
\begin{aligned}
&l_k\left({\xi}\right):=
\begin{cases}
\begin{aligned}
&1 && \text{if}\quad 
\left[{\xi}\right]_k \geq
\left[{\xi}_{\rm u}+\varepsilon\Delta\right]_k\\
&-\infty && \text{otherwise}
\end{aligned}
\end{cases} \forall k\in{\mathcal I},\\
&l_{N+k}\left({\xi}\right):=
\begin{cases}
\begin{aligned}
&1 && \text{if}\quad \left[\xi\right]_k \leq \left[{\xi}_{\rm l}-\varepsilon\Delta\right]_k\\
&-\infty && \text{otherwise}
\end{aligned}
\end{cases} \forall k\in{\mathcal I},
\end{aligned}
\]
and $l_{2N+1}\left({\xi}\right):=0$. In (\ref{eq:theorembeta}), the conventional extended arithmetics apply. For example, we have $1/0 = \infty$, $0/0 = 0$, and $0\cdot\infty = 0$. The optimal value of (\ref{eq:theorembeta}) is obtained if either $\left[{p}_{ik}\right]_k=\varepsilon N$ for any $\left(i,k\right)\in{\mathcal I}\times{\mathcal I}$ such that $i=\argmaxA_{i^\prime\in{\mathcal I}}\left[\xi_{i^\prime}\right]_{k}$ or $\left[{p}_{ik}\right]_{k-N}=-\varepsilon N$ for any $\left(i,k\right)\in{\mathcal I}\times\left\{N+1,\ldots,2N\right\}$ such that $i=\argminA_{i^\prime\in{\mathcal I}}\left[\xi_{i^\prime}\right]_{k}$, with $\alpha_{ik}=\max\left\{1,N/\beta\right\}$ in either case. These cases are when a sample originally closest to the boundary of $\Xi^{\rm a}$ moves along the shortest path to reach it by $\varepsilon\Delta$. Thus, we have $V= \Delta^{-1}$. Further, we observe that 
\[
\begin{aligned}
V 
&\geq \sup_{{\mathbb P}\in{\mathcal P}_\varepsilon\left({\Xi}^{\rm u}\right)}{\mathbb P}\left[{\xi}\notin
\Xi^{\rm a}\right]\\
&\geq \sup_{{\mathbb P}\in{\mathcal P}_\varepsilon\left({\Xi}^{\rm u}\right)\cap
\left\{{\mathbb P}^\prime\in{\mathcal P}\left({\Xi}^{\rm u}\right): {\mathbb P}^\prime\left({\xi}\in{\Xi}\right)=1\right\}}{\mathbb P}\left[{\xi}\notin\Xi^{\rm a}\right]\\
&= \sup_{{\mathbb P}\in{\mathcal P}_\varepsilon\left({\Xi}^{\rm u}\right)\cap\left\{{\mathbb P}^\prime\in{\mathcal P}\left({\Xi}^{\rm u}\right): {\mathbb P}^\prime\left({\xi}\in{\Xi}\right)=1\right\}}{\mathbb P}\left[{\xi}\notin{\Omega}\right]\\
&= \sup_{{\mathbb P}\in{\mathcal P}_\varepsilon\left(\Xi\right)}{\mathbb P}\left[{\xi}\notin{\Omega}\right].
\end{aligned}
\]
Hence the statement holds. 
\qed
\end{proof}

Replacing the Wasserstein ball ${\mathcal P}_\varepsilon\left(\Xi\right)$ in (\ref{eq:appx}) with ${\mathcal P}_{\varepsilon}\left(\Omega\right)$, we obtain the problem
\begin{equation}\label{eq:appx2}
\min_{x_1\in{\mathcal X}_1,\left(A,a\right)\in{\mathcal A}\left(x_1,{\Omega}\right)}
c^\top_1x_1 + h_\Omega\left(A,a\right)
\quad\text{s.t.\quad(\ref{eq:ff2})}
\end{equation}
where we impose (\ref{eq:ff2}) because the second-stage problem should always be feasible, as stated in Remark \ref{rem:ff}. Since the feasibility of the affine policy is ensured over $\Omega\subseteq\Xi$, (\ref{eq:appx2}) can yield a less conservative solution than (\ref{eq:appx}).

One of the biggest advantages of constructing $\Omega$ in the above-described way  is that we can preserve the independence of sample size discussed in Section \ref{sec:appx}. If not considering this property, one might be able to obtain an even smaller uncertainty set with a probabilistic guarantee using existing methods, e.g., \cite{duan2018distributionally} and \cite{poolla2020wasserstein}, which are mostly approximations of Wasserstein distributionally robust chance constraints. However, the existing methods can yield an uncertainty set that does not include all the historical samples. As we cannot apply an affine policy for samples outside the uncertainty set, the scalability issue may still exist in this case. Therefore, we develop the new method for building $\Omega$, which includes all the samples. 

Problem (\ref{eq:appx2}) can be solved by combining Benders decomposition or the C\&CG algorithm for addressing (\ref{eq:ff2}), which consists of many equalities, with the cutting-plane algorithm for solving (\ref{eq:appx}). In this article, we choose the C\&CG algorithm, which is reportedly faster than Benders decomposition \cite{zeng2013solving}. In what follows, we explain the resulting iterative algorithm for (\ref{eq:appx2}). Some symbols used to describe the cutting-plane algorithm for (\ref{eq:appx}) in Section \ref{sec:appx} may be re-defined. 

Based on the discussions in the previous section, we first reformulate (\ref{eq:appx2}) as 
\begin{equation}\label{eq:appx3}
\begin{aligned}
\min_{\substack{x_1\in{\mathcal X}_1,{\mu}\in{\mathcal M}\left(A\right),\\
\left(A,a\right)\in{\mathcal A}^{\rm v}\left(x_1,{\Omega}\right)}}
\quad &c^\top_1x_1 + c^\top_2\left(A\tilde{\xi}+a\right)+c^\top_{3,\Omega}{\mu}\\
\text{s.t.}\quad &\text{(\ref{eq:ff2})}.
\end{aligned}
\end{equation}
Subsequently, by introducing a wait-and-see decision vector ${x}^{\rm f}_{2q}\in{\mathbb R}^{n_2}$ associated with the $q$th vertex ${\xi}^{\rm f}_{q}$ of $\Xi$ for each $q\in{\mathcal Q}:=\left\{1,\ldots,\lvert{\mathcal V}\left({\Xi}\right)\rvert\right\}$ to deal with (\ref{eq:ff2}), we reformulate (\ref{eq:appx3}) as the finite-dimensional MILP
\begin{equation}\label{eq:appx4}
\begin{aligned}
\min_{\substack{x_1\in{\mathcal X}_1,{\mu}\in{\mathcal M}\left(A\right), {x}^{\rm f}_{2q},\\
\left(A,{a}\right)\in{\mathcal A}^{\rm v}\left({x}_1,{\Omega}\right)}} \quad & {c}^\top_1{x}_1 + {c}^\top_2\left(A\tilde{\xi}+{a}\right) + {c}^\top_{3,\Omega}{\mu}\\
\text{s.t.}\quad & A^{\rm in}_1{x}_1 + A^{\rm in}_2{x}^{\rm f}_{2q} + A^{\rm in}_3{\xi}^{\rm f}_{q} \leq {b}^{\rm in}\quad\forall q\in{\mathcal Q}.
\end{aligned}
\end{equation}
Similar to (\ref{eq:appx}), we decompose the large-scale problem (\ref{eq:appx4}) into a master problem and subproblems that are iteratively solved. For initialization, we select any ${\xi}_{l1}\in{\mathcal V}\left(\Omega\right)$ and define $\Omega^{\rm v}_{l1}:=\left\{{\xi}_{l1}\right\}$ for each $l\in{\mathcal L}$. We also select any ${\xi}^{\rm f}_1\in{\mathcal V}\left(\Xi\right)$ and let ${\mathcal Q}_1:=\left\{Q_1\right\}$ with $Q_1=1$. At each iteration $P\geq1$, we solve the master problem
\begin{equation}\label{eq:sec4_master}
\begin{aligned}
\min_{\substack{{x}_1\in{\mathcal X}_1,{\mu}\in{\mathcal M}\left(A\right), {x}^{\rm f}_{2q},\\
\left(A,{a}\right)\in{\mathcal A}^{\rm v}_P\left({x}_1,{\Omega}^{\rm v}_P\right)}} \quad &
{c}^\top_1{x}_1 + {c}^\top_2\left(A\tilde{\xi}+{a}\right) + {c}^\top_{3,\Omega}{\mu}\\
\text{s.t.}\quad & A^{\rm in}_1{x}_1 + A^{\rm in}_2{x}^{\rm f}_{2q} + A^{\rm in}_3{\xi}^{\rm f}_{q} \leq {b}^{\rm in}\quad\forall q\in{\mathcal Q}_P
\end{aligned}
\end{equation}
where $\Omega^{\rm v}_P:=\left(\Omega^{\rm v}_{1P},\ldots,\Omega^{\rm v}_{LP}\right)$. 
Let $\left({x}_{1P},A_{P},{a}_P\right)$ and $L_P$ denote the solution corresponding to $\left({x}_1,A,{a}\right)$ and optimal value of (\ref{eq:sec4_master}), respectively. 
Subsequently, we solve the first subproblem
\begin{equation}\label{eq:maxff}
\max_{{\xi}\in{\mathcal V}\left({\Xi}\right)} f^{\rm f}\left({x}_{1P},{\xi}\right)
\end{equation}
whose solution and optimal value are denoted by ${\xi}^{\rm f}_{Q_{P+1}}$ and $V^{\rm f}_P$, respectively. If $V^{\rm f}_P>\rho$, implying that (\ref{eq:ff2}) is violated, we let $Q_{P+1}:=Q_P+1$, ${\mathcal Q}_{P+1}:={\mathcal Q}_P\cup\left\{Q_{P+1}\right\}$, and $\Omega^{\rm v}_{P+1}:=\Omega^{\rm v}_P$. Then, the iteration step increases and we solve (\ref{eq:sec4_master}) again. Otherwise, we let $Q_{P+1}:=Q_P$ and ${\mathcal Q}_{P+1}:={\mathcal Q}_P$. Further, we solve the second subproblem (\ref{eq:sub}) for each $l\in{\mathcal L}$. 
The rest of this algorithm works similarly to the algorithm for (\ref{eq:appx}). Specifically, with ${\xi}_{lP}$ and $F_{lP}$ denoting the solution and optimal value of (\ref{eq:sub}), respectively, we define $\Omega^{\rm v}_{l\left(P+1\right)}:=\Omega^{\rm v}_{lP}\cup\left\{{\xi}_{lP}\right\}$, if $F_{lP}>\rho$, and $\Omega^{\rm v}_{l\left(P+1\right)}:=\Omega^{\rm v}_{lP}$, otherwise. If $F_P:=\max_{l\in{\mathcal L}}F_{lP}\leq\rho$, the algorithm stops and $\left({x}^\ast_{1}, A^\ast, {a}^\ast\right)=\left({x}_{1P}, A_P, {a}_P\right)$ is returned as a solution to (\ref{eq:appx2}). Otherwise, the iteration step increases and we solve (\ref{eq:sec4_master}) again. In Algorithm \ref{alg:sec4}, we provide a pseudocode of the algorithm for (\ref{eq:appx2}). 

Meanwhile, it should be noted that the first subproblem (\ref{eq:maxff}) is a max-min problem that is not easy to handle. To solve (\ref{eq:maxff}), we reformulate it as an MILP using the Big M method \cite{bemporad1999control,gabrel2014robust}. First, we rewrite (\ref{eq:maxff}) as
\begin{equation}\label{eq:maxff2}
\max_{{\zeta}\in\left\{0,1\right\}^m} f^{\rm f}\left({x}_{1P},\underline{\xi} + {\zeta}\circ\left(\overline{\xi}-\underline{\xi}\right)\right)
\end{equation}
where each binary vector $\zeta\in\left\{0,1\right\}^m$ corresponds to a vertex of $\Xi$. Subsequently, we take the dual formulation of the inner problem (\ref{eq:ffproblem}) for ${\xi}=\underline{\xi} + {\zeta}\circ(\overline{\xi}-\underline{\xi})$ to obtain a maximization problem with a bilinear objective function in terms of the dual decision variables and $\zeta$. Finally, we linearize the bilinear terms by introducing auxiliary integer variables to obtain the MILP equivalent to (\ref{eq:maxff}). As a result, we can solve the master problem and two subproblems of (\ref{eq:appx2}) as a finite-dimensional MILP or LP problem. 

Problem (\ref{eq:sec4_master}) is a relaxation of (\ref{eq:appx2}) for any iteration step $P$ such that $V^{\rm f}_P>0$ or $F_P>0$. Thus, $L_P$ monotonically converges to the optimal value of (\ref{eq:appx2}). Moreover, as $\lvert{\mathcal V}\left({\Xi}\right)\rvert=\lvert{\mathcal V}\left({\Omega}\right)\rvert<\infty$, the algorithm yields a solution optimal within the optimality tolerance of the off-the-shelf MILP solver in a finite number of iterations.

\begin{algorithm}[t!]
\caption{Algorithm for (\ref{eq:appx2})}\label{alg:sec4}
\begin{algorithmic}
\Require Feasibility tolerance $\rho\geq0$, any ${\xi}^{\rm f}_1\in{\mathcal V}\left(\Xi\right)$ and ${\xi}_{l1}\in{\mathcal V}\left(\Omega\right)$ for each $l\in{\mathcal L}$ 
\Ensure Solution $\left({x}_1^\ast,A^{\ast},{a}^{\ast}\right)$ to (\ref{eq:appx2})
\For{$l\gets 1$ to $L$}
\State $\Omega^{\rm v}_{l1}\gets\left\{{\xi}_{l1}\right\}$
\EndFor
\State{$Q_1\gets1$, ${\mathcal Q}_1\gets\left\{Q_1\right\}$, $F_1\gets \infty$, $P\gets 1$}
\While{$F_P>\rho$}
\State{Solve (\ref{eq:sec4_master}), $\left({x}_1^\ast,A^{\ast},{a}^{\ast}\right)\gets
\left({x}_{1P},A_P,{a}_P\right)$,}
\State{Solve (\ref{eq:maxff})}
\If{$V^{\rm f}_P>\rho$}
\State{$Q_{P+1}\gets Q_P+1$, ${\mathcal Q}_{P+1}\gets{\mathcal Q}_P\cup\left\{Q_{P+1}\right\}$,}
\State{$\Omega^{\rm v}_{P+1}\gets\Omega^{\rm v}_P$, $F_{P+1}\gets F_P$}
\Else
\For{$l\gets 1$ to $L$}
\State{Solve (\ref{eq:sub})}
\If{$F_{lP}>\rho$}
\State{$\Omega^{\rm v}_{l(P+1)}\gets\Omega^{\rm v}_{lP}\cup\left\{{\xi}_{lP}\right\}$}
\Else
\State{$\Omega^{\rm v}_{l(P+1)}\gets\Omega^{\rm v}_{lP}$}
\EndIf
\EndFor
\State{$F_{P+1}\gets\max_l F_{lP}$}
\EndIf
\State{$P\gets P+1$}
\EndWhile
\end{algorithmic}
\end{algorithm}

In the following section, we examine the applicability and effectiveness of the 2-DRLP formulation (\ref{eq:appx2}) using an affine policy for a practical decision-making problem. 

\section{Application to Unit Commitment}\label{sec:UC}
In this section, we develop a UC model in the form of (\ref{eq:appx2}) for power systems under the uncertainty of renewable energy generation (REG). As a fundamental planning problem for conventional generators, the UC problem is solved on a daily basis to optimize their commitment status as well as economic dispatch policies (i.e., the tertiary controllers) given a forecast of the REG and demand. In the following subsections, we first present a deterministic UC model without considering any uncertainty to introduce basic decision variables and constraints. Subsequently, we explain how to formulate our UC model. Finally, we discuss the results of numerical experiments. Some symbols used in the previous sections may be re-defined. 

\subsection{Deterministic UC}
We consider the UC problem for a transmission system of $I$ buses connected by $L$ transmission lines over a planning horizon of $T$ time periods, the indices of which are denoted by $i$, $l$, and $t$, respectively. Each bus has a conventional generator, a load, and an REG system, all with the same index. The demand of the load at each bus in each time period is known a priori, while the REG is uncertain. We define the forecast error of REG at bus $i$ in time period $t$ as a random variable $\xi_{it}$. Let ${\xi}$ denote a vector of $\xi_{it}$ for all $\left(i,t\right)$. The REG curtailment and demand shedding are fully allowed with penalties. The transmission network is represented by a DC power flow model. Assuming that the realization of $\xi$ is given, we formulate a deterministic UC model in this subsection. 

The decision variables of the deterministic UC model are binary variables $u^{\rm o}_{it}$, $u^{\rm u}_{it}$, and $u^{\rm d}_{it}$, denoting the on/off, start-up, and shut-down status of generator $i$ in time period $t$, respectively, and real variables $x^{\rm g}_{it}$, $x^{\rm r}_{it}$, and $x^{\rm d}_{it}$, denoting the conventional generation, REG curtailment, and demand shedding at bus $i$ in time period $t$, respectively. Let $u\in\{0,1\}^{3IT}$ and $x\in{\mathbb R}^{3IT}$ denote vectors of the binary and real decision variables, respectively. 

The objective of the deterministic UC model is to minimize the total operating cost, i.e., a sum of the fixed cost $c^\top_1 u$ and the variable cost $c^\top_2 x$. Here, $c_1\in{\mathbb R}^{3IT}$ is defined by the no-load, start-up, and shut-down costs of each generator. Further, $c_2\in{\mathbb R}^{3IT}$ is defined by the marginal costs of conventional generation, REG curtailment, and demand shedding at each bus in each time period. 

We denote by ${\mathcal U}$ the feasible set of $u$, which is defined by logic constraints among the binary variables as well as the minimum up and down time constraints of each generator. For a specific formulation of $\mathcal U$, the reader is referred to \cite{cho2022affine}. 
The other decision vector $x$ should meet the following constraints (\ref{eq:gen_ul})--(\ref{eq:bal}) for all the associated indices $(i,l,t)$. First, the conventional generation is chosen under the capacity constraint
\begin{equation}\label{eq:gen_ul}
\underline{X}_i u^{\rm o}_{it} \leq {x}^{\rm g}_{it} \leq \overline{X}_iu^{\rm o}_{it}
\end{equation}
where $\underline{X}_i$ and $\overline{X}_i$ denote the minimum and maximum possible output of generator $i$ when it is in operation, respectively, in addition to the ramping constraint
\begin{equation}\label{eq:ramping}
\begin{aligned}
&-X^{\rm rd}_i u^{\rm o}_{it} - X^{\rm sd}_i u^{\rm d}_{it}\leq x^{\rm g}_{it} - x^{\rm g}_{i\left(t-1\right)} \leq X^{\rm ru}_{i} u^{\rm o}_{i(t-1)} + X^{\rm su}_i u^{\rm u}_{it}
\end{aligned}
\end{equation}
where $X^{\rm rd}_i$, $X^{\rm sd}_i$, $X^{\rm ru}_i$, and $X^{\rm su}_i$ denote the ramp-down, shut-down-ramp, ramp-up, and start-up-ramp limits of generator $i$,  respectively. The upper and lower limits of REG curtailment and those of demand shedding 
are expressed by 
\begin{equation}\label{eq:curt_shed_ul}
0\leq x^{\rm r}_{it} \leq w_{it} + {\xi}_{it},\quad
0\leq x^{\rm d}_{it} \leq d_{it}
\end{equation}
where $w_{it}$ and $d_{it}$ denote the forecast of REG and the demand at bus $i$ in time period $t$, respectively. Furthermore, $x$ should never violate two system-wide constraints, i.e., the transmission capacity constraint 
\begin{equation}\label{eq:line}
\begin{aligned}
&-{F}_l\leq \sum\nolimits_{i}F_{il} \left(x^{\rm g}_{it}+ w_{it} + {\xi}_{it} - x^{\rm r}_{it} - d_{it} + x^{\rm d}_{it} \right)\leq{F}_{l}
\end{aligned}
\end{equation}
where $F_l$ and $F_{il}$ denote the maximum possible power flow in transmission line $l$ and the power shift factor between bus $i$ and transmission line $l$, respectively, and the power supply--demand balance condition
\begin{equation}\label{eq:bal}
\sum\nolimits_{i} \left(x^{\rm g}_{it} + w_{it} + {\xi}_{it} - x^{\rm r}_{it} - d_{it} + x^{\rm d}_{it}\right)=0.
\end{equation}
Compactly, the deterministic UC model is written as 
\[
\begin{aligned}
\min_{u\in{\mathcal U}, x\in{\mathbb R}^{3IT}}&&c^\top_1u + c^\top_2x\text{\quad s.t.\ }\text{(\ref{eq:gen_ul})--(\ref{eq:bal})}\quad \forall i,l,t,
\end{aligned}
\]
which is an MILP that can be easily solved using off-the-shelf solvers. However, $\xi$ is unknown a priori in practice. To address the uncertainty of $\xi$, we use the 2-DRLP formulation (\ref{eq:appx2}) as explained in the following subsection.

\subsection{Proposed Model}
Our UC model is obtained by applying Wasserstein DRO and an affine policy to the two-stage robust UC model in \cite{cho2019box} modified for the transmission system of our interest. In this subsection, we formulate the Wasserstein DRO counterpart of the model in \cite{cho2019box}. Subsequently, we explain the affine policy. The complete formulation of the proposed UC model is omitted to avoid redundancy. 

We first provide the Wasserstein DRO counterpart of the UC model in \cite{cho2019box}
\begin{equation}\label{eq:EDRUC}
\min_{u\in{\mathcal U},\left(\overline{x}^{\rm g},\underline{x}^{\rm g}\right)\in{\mathcal X}^{\rm g}\left(u\right)} {c}^\top_1{u} + \max_{{\mathbb P}\in{\mathcal P}_{\varepsilon}\left({\Xi}\right)} {\mathbb E}_{\mathbb P}\left[f\left(\overline{x}^{\rm g},\underline{x}^{\rm g},{\xi}\right)\right]
\end{equation}
where 
\begin{equation}\label{eq:f_uc}
\begin{aligned}
&f\left(\overline{x}^{\rm g},\underline{x}^{\rm g},{\xi}\right):=\min_{{x}\in{\mathcal X}\left(\overline{x}^{\rm g},\underline{x}^{\rm g},{\xi}\right)}{c}^\top_2{x}
\end{aligned}
\end{equation}
denotes the optimal value of the second-stage problem. The entries of $\overline{x}^{\rm g}\in{\mathbb R}^{IT}$ and $\underline{x}^{\rm g}\in{\mathbb R}^{IT}$ are $\overline{x}^{\rm g}_{it}$ and $\underline{x}^{\rm g}_{it}$ for all $\left(i,t\right)$, respectively. Here, $\overline{x}^{\rm g}_{it}$ and $\underline{x}^{\rm g}_{it}$ denote the allowable upper and lower limits of conventional generation at bus $i$ in time period $t$, respectively, which are introduced to enable the non-anticipative operation of each conventional generator. Specifically, $\overline{x}^{\rm g}_{it}$ and $\underline{x}^{\rm g}_{it}$ are designed so that any $x^{\rm g}_{it}$ such that
\begin{equation}\label{eq:gen_ul_in_ft}
\underline{x}^{\rm g}_{it}\leq x^{\rm g}_{it}\leq\overline{x}^{\rm g}_{it}
\end{equation}
can be implemented independently of the conventional generation at bus $i$ in any other time period while satisfying the capacity and ramping constraints of generator $i$. Accordingly, ${\mathcal X}^{\rm g}\left({u}\right)$ is defined as a set of any $\left(\overline{x}^{\rm g},\underline{x}^{\rm g}\right)$ such that the following constraint hold: 
\[
\begin{cases}
\begin{aligned}
&\underline{X}_i u^{\rm o}_{it} \leq\underline{x}^{\rm g}_{it}\leq \overline{x}^{\rm g}_{it} \leq \overline{X}_iu^{\rm o}_{it}\\
&\overline{x}^{\rm g}_{it} - \underline{x}^{\rm g}_{i\left(t-1\right)} \leq X^{\rm ru}_{i} u^{\rm o}_{i(t-1)} + X^{\rm su}_i u^{\rm u}_{it}\\
&\overline{x}^{\rm g}_{i\left(t-1\right)} - \underline{x}^{\rm g}_{it}  \leq X^{\rm rd}_i u^{\rm o}_{it} + X^{\rm sd}_i u^{\rm d}_{it}
\end{aligned}
\end{cases}
\forall i,t.
\]
Given $\left(\overline{x}^{\rm g},\underline{x}^{\rm g}\right)\in{\mathcal X}^{\rm g}\left({u}\right)$, the feasible set of ${x}$ in (\ref{eq:f_uc}) is defined as
\[
\begin{aligned}
&{\mathcal X}\left(\overline{x}^{\rm g},\underline{x}^{\rm g},{\xi}\right):=\left\{{x}\in{\mathbb R}^{3IT}:\text{(\ref{eq:gen_ul_in_ft}),(\ref{eq:curt_shed_ul})--(\ref{eq:bal})} \quad \forall i,l,t\right\},
\end{aligned}
\]
which, notably, encodes no dynamic constraint. Thus, for a fixed $\left(\overline{x}^{\rm g},\underline{x}^{\rm g}\right)$, solutions to (\ref{eq:f_uc}) for time period $t$ depend only on the realization of forecast error in time period $t$. In other words, we can optimize the conventional generation, REG curtailment and demand shedding at the second stage {\it non-anticipatively}, i.e., not using the future realization of forecast error. If not relying on $\left(\overline{x}^{\rm g},\underline{x}^{\rm g}\right)$, then the ramping constraint (\ref{eq:ramping}) may  still be effective at the second stage. This implies that we have to observe the future forecast error to determine the conventional generation, REG curtailment and demand shedding in each time period, which is unrealistic. Thus, we introduce and determine $\left(\overline{x}^{\rm g},\underline{x}^{\rm g}\right)$ at the first stage. Meanwhile, the support $\Xi$ of $\xi$ is defined using the forecast of REG as well as the capacity of each REG system.

We now apply an affine policy to (\ref{eq:EDRUC}). In this study, we use $x^{\rm ga}_{it}\left({\xi}^{\rm t}_t\right) := a^{\rm g}_{it}\xi^{\rm t}_t + b^{\rm g}_{it}$, $x^{\rm da}_{it}\left({\xi}^{\rm t}_t\right) := a^{\rm d}_{it}\xi^{\rm t}_t + b^{\rm d}_{it}$, and $x^{\rm ra}_{it}\left({\xi}^{\rm t}_t\right) := a^{\rm r}_{it}\xi^{\rm t}_t + b^{\rm r}_{it}$ as decision rules for $x^{\rm g}_{it}$, $x^{\rm d}_{it}$, and $x^{\rm r}_{it}$ for each $(i,t)$, respectively, where $\xi^{\rm t}_t:=\sum_{i}\xi_{it}$ denotes the total forecast error in time period $t$. Although the coefficients of an affine function can be arbitrary as discussed in Section \ref{sec:appx}, we employ these functions to reduce the number of decision variables. Similar affine functions are frequently adopted in the literature on two-stage optimization for power system operations \cite{lorca2016multistage,duan2018distributionally}. 

Applying the affine policy to (\ref{eq:EDRUC}) and, further, re-defining the Wasserstein ball over $\Omega\subseteq\Xi$, we can formulate our UC model in the form of (\ref{eq:appx2}). In the following subsection, we discuss simulation results. 

\subsection{Numerical Experiments}\label{sec:numericalsimulations}
In this section, we compare the economic and computational performances of our UC model to those of six existing models, SUC, RUC, MUC, KUC, NUC and CUC, on  6-,  24-, and  118-bus test systems. Here, SUC and RUC are the SP and RO counterparts of (\ref{eq:EDRUC}), respectively. Moreover, MUC, KUC, NUC and CUC are modifications of the UC models using DRO with ambiguity sets based on the moment conditions, KL divergence, 1-norm distance, and CDF in \cite{zhou2019distributionally}, \cite{chen2018distributionally}, \cite{ding2018duality}, and \cite{duan2017data}, respectively. 

The generator, load, and branch data of the 6- and 24-bus systems are  from \cite{IIT} and \cite{ordoudis2016updated}, respectively. We locate one wind farm of capacity 80 MW at bus 2 of the 6-bus system, and three wind farms, each of capacity 300 MW, at buses 3, 5, and 7 of the 24-bus system. For the 118-bus system, the generator and load data are from \cite{IIT}, and we use the branch data from \cite{pena2017extended} to accommodate five wind farms of capacities 40 MW, 75 MW, 120 MW, 250 MW, and 300 MW at buses 24, 27, 31, 100, and 82, respectively, as well as five solar farms of capacities 700 MW, 330 MW, 200 MW, 200 MW, and 150 MW at buses 32, 92, 54, 18, and 15, respectively. The penetration levels of REG  (i.e., the ratio of the total REG capacity to the peak demand) of the 6-, 24-, and 118-bus systems are 30.77\%,  33.96\%, and 35.38\%, respectively. For all the test systems, we use a planning horizon of $T=24$ time periods of 1h. We assume that no more than 10 loads with the highest total demand can be shed, while all the REG systems can be curtailed. The marginal costs of demand shedding and REG curtailment are set to \$3500/MWh and \$20/MWh, respectively. We run the simulations using MATLAB with MOSEK 9.3 for MUC and using CPLEX 12.10 for the others on a PC with an Intel Core i7 3.70 GHz processor and 32 GB RAM. We discuss the results in the following subsections.

 \begin{figure}[t!]
    \centering
       \includegraphics[width=6in]
       {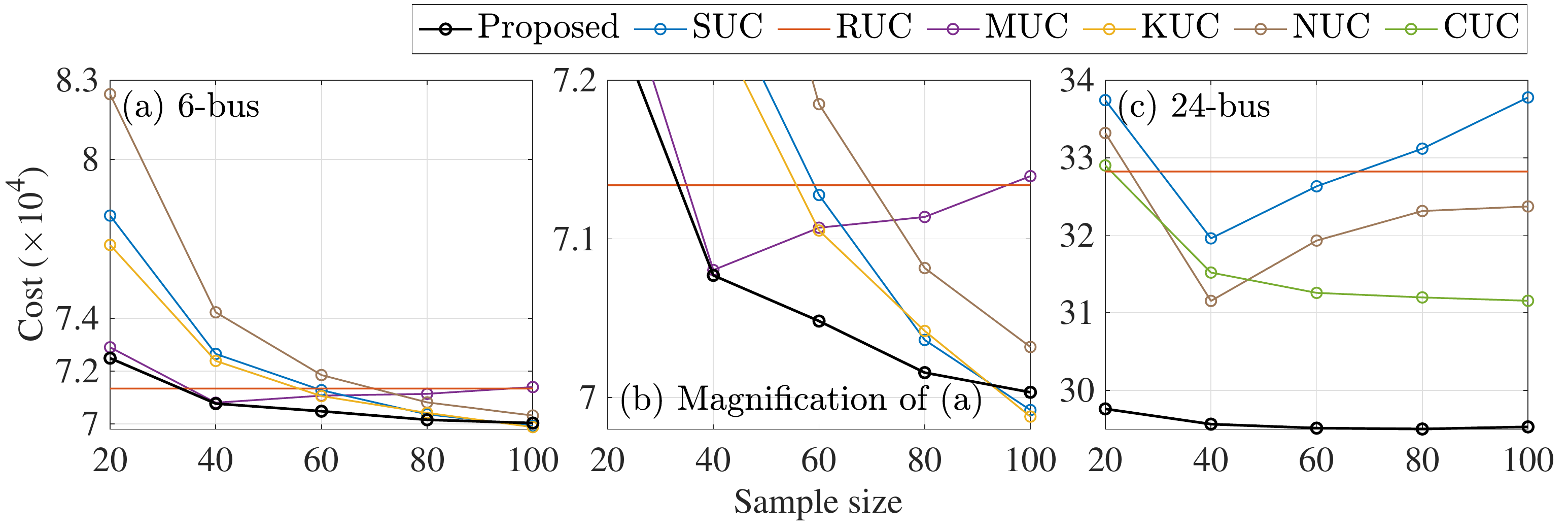}
    \caption{Average out-of-sample costs for different sample sizes.}
\label{fig:DRO_6bus}
\end{figure}

\subsubsection{Comparison via Random Sampling}
In the following, we compare our UC model to the six benchmark models on the 6- and 24-bus systems via random sampling. The simulation scheme is as follows: First, we model the true distribution ${\mathbb P}^\star$ of the wind power forecast error as a Pearson distribution based on the observation data from  \cite{hodge2012wind}. Randomly generating $N=20, 40, \ldots, 100$ samples according to ${\mathbb P}^\star$, we build empirical distributions of the forecast error and solve each UC model. We repeat this process 50 times, i.e., with 50 independent sample sets, for statistical robustness. Then, we compare the UC models in terms of the average out-of-sample cost and average computation time. For a here-and-now decision $\left({u},\overline{x}^{\rm g},\underline{x}^{\rm g}\right)\in{\mathcal U}\times{\mathcal X}^{\rm g}\left({u}\right)$ obtained by solving any model, the {\it out-of-sample cost} is defined as 
\[
J\left({u},\overline{x}^{\rm g},\underline{x}^{\rm g}\right):={c}_1^\top{u} + {\mathbb E}_{{\mathbb P}^\star}\left[f\left(\overline{x}^{\rm g},\underline{x}^{\rm g},{\xi}\right)\right].
\]
As exactly computing the out-of-sample cost is difficult, we use the sample average approximation to estimate it with an additional 10,000 scenarios of the forecast error that are randomly generated according to ${\mathbb P}^\star$ independently of the $N$ samples. For our UC model, we set $\beta=100$ and use the holdout method \cite{esfahani2018data} to choose $\varepsilon$ from $10^{-3}$, $10^{-2}$ and $10^{-1}$. For each benchmark model using DRO, we set the parameter(s) of the ambiguity set as guided in the corresponding research paper with its confidence level, if required, set to 0.99. We set a timeout limit of 1h only for the 24-bus system. Further, we solve SUC, KUC and NUC on the 24-bus system with the fast-forward selection method \cite{heitsch2003scenario} to reduce the number of samples used for building empirical distributions, thus avoiding time-out and memory-outage errors.

On the 6-bus system, MUC has no solution with one sample set for $N=20$, while CUC is infeasible with $35,42,45,47,49$ sample sets for $N=20,\ldots,100$, respectively. On the 24-bus system, MUC and KUC with the first five sample sets for any $N$ cannot be solved due to timeout errors, neither of which we implement further. We illustrate the average out-of-sample costs of each UC model in Fig. \ref{fig:DRO_6bus}, except those of CUC for the 6-bus system and those of MUC and KUC for the 24-bus system. We also report the average computation time on the 6- and 24-bus system in Tables \ref{tab:6bus_time} and \ref{tab:24bus_time}, respectively. 

\renewcommand{\arraystretch}{1}\setlength{\tabcolsep}{0.5em}
\begin{table}[t]
\center
\caption{Average computation time (in seconds): 6-bus system}\label{tab:6bus_time}
\begin{tabular}{ |c|c c c c c| }
\hline
$N$ & 20 & 40 & 60 & 80 & 100 \\
\hline
\hline
Prop. & 8.04 & 5.74 & 7.81 & 9.56 & 6.40 \\
\hline
RUC & \multicolumn{5}{c|}{8.22} \\
\hline
SUC & 1.96 & 3.98 & 7.69 & 12.41 & 19.55 \\
\hline
NUC & 16.62 & 45.78 & 93.57 & 156.22 & 247.53\\
\hline
MUC & 310.79 & 365.57 & 372.34 & 356.28 & 343.87 \\
\hline
KUC & 3834.08 & 5966.40 & 6106.50 & 6665.26 & 6761.11 \\
\hline
\end{tabular}
\end{table}

\renewcommand{\arraystretch}{1}\setlength{\tabcolsep}{0.74em}
\begin{table}[t]
\center
\caption{Average computation time (in seconds): 24-bus system}\label{tab:24bus_time}
\begin{tabular}{ |c|c c c c c| }
\hline
$N$ & 20 & 40 & 60 & 80 & 100 \\
\hline
\hline
Prop. & 77.60 & 73.57 & 76.11 & 106.58 & 79.63 \\
\hline
RUC & \multicolumn{5}{c|}{2.99} \\
\hline
CUC & 9.44 & 10.65 & 11.04 & 10.96 & 10.85\\
\hline
SUC & 19.26 & 193.01 & 140.87 & 184.96  & 137.94\\
\hline
NUC & 79.68 & 765.98 & 761.52 & 763.93  & 618.36 \\
\hline
\end{tabular}
\end{table}

In Fig. \ref{fig:DRO_6bus}, the proposed model shows the lowest average out-of-sample costs for $N=40,60,80$ and all $N$'s on the 6- and the 24-bus system, respectively. For $N=20$ on the 6-bus system, RUC leads to the lowest average out-of-sample cost. Thus, RUC, which is the most robust, can be an alternative to our model when there are few samples. For $N=100$ on the 6-bus system, SUC and KUC perform better than the proposed model. In fact, SUC, KUC and NUC may incur lower out-of-sample costs than our model when a huge number of samples are available. However, it is highly likely that their computational performances are not satisfactory even for moderate-size systems in such a case due to their poor scalability regarding sample size, as can be observed from Fig. \ref{fig:DRO_6bus} (c).

Tables \ref{tab:6bus_time} and \ref{tab:24bus_time} verify that the computational load of our model is independent of sample size. Although our model is not the most computationally tractable for every single case, the average increase in computation time, if any, is acceptable given the accompanying decrease in the average out-of-sample cost for most of the cases, compared to any benchmark model.

\subsubsection{Comparison Using Real Data}
In the following, we further compare the UC models on the 118-bus system with a 365-day real-world data set. The data sets of wind and solar power forecast errors are from \cite{canwea} and \cite{nrel}, respectively. The simulation scheme is as follows: First, we construct $S_N$ pairs of training and test distributions, both of which are empirical distributions obtained using $N$-day observation data before and from day $D^N_{k}$, $k = 1,2,\ldots,S_N$ of the year, respectively. We set $S_N=11,9,\ldots,1$ for $N=30,60,\ldots,180$, respectively. Further, day $D^N_k$ corresponds to the first day of month $N/30+k$, except for $\left(N,k\right)=\left(60,1\right)$, in which case we set $D^{60}_1=61$ to represent the 2nd of March. We build the distribution pairs in this way so they consecutively cover almost all of the one-year observation data. We solve each UC model for each training distribution and evaluate the ``cost," i.e., the expected total operating cost with respect to the associated test distribution, as well as the computation time. For SUC, KUC and NUC, we rebuild the training distributions with only five samples obtained using the scenario reduction method. We set a timeout limit of 3h. 

Tables \ref{tab:118cost} and \ref{tab:118time}  show the average costs and computation times of the UC models, except for MUC and KUC, which face memory-outage and timeout errors for all  cases, respectively. Moreover, the results of CUC are only for six and three distribution pairs with $N=30,60$, respectively, except when it is infeasible. The numbers in parentheses are the percentage increases from the average costs of our model to those of RUC, which is the closest to our model in terms of the average cost. The results indicate that the proposed model leads to the lowest average costs for all cases except for the smallest $N$ at the expense of acceptable increases in computation time, similar to the results for the 6- and 24-bus systems, but on the larger-scale system with the real data set. 

\renewcommand{\arraystretch}{1.1}\setlength{\tabcolsep}{0.15em}
\begin{table}[t!]
\centering
\caption{Average cost ({$\$10^6$}): 118-bus system}\label{tab:118cost}
\begin{tabular}{ |c|c c c c c c| }
\hline
$N$ & 30 & 60 & 90 & 120 & 150 & 180 \\
\hline
\hline
Prop. & 1.34 & 1.31 & 1.33 & 1.33 & 1.34 & 1.37 \\
\hline
SUC & 3.01 & 2.88 & 3.05 & 3.19 & 3.00 & 2.90 \\
\hline
\multirow{2}{*}{RUC} & 1.33 & 1.33 & 1.35 & 1.34 &  1.35 & 1.38 \\
 & ($-$0.51\%) & (1.77\%) & (1.59\%) & (1.49\%) &  (1.21\%) & (0.99\%) \\
\hline
NUC & 3.00 & 3.04 & 3.07 & 3.17 & 3.09 & 2.59 \\
\hline
CUC & 1.49 & 1.36 & - & - & - & -\\
\hline
\end{tabular}
\end{table}

\renewcommand{\arraystretch}{1.1}\setlength{\tabcolsep}{0.378em}
\begin{table}[t!]
\centering
\caption{Average computation time (in seconds):  118-bus system}\label{tab:118time}
\begin{tabular}{ |c|c c c c c c| }
\hline
$N$ & 30 & 60 & 90 & 120 & 150 & 180 \\
\hline
\hline
Prop. & 418.59 & 634.39 & 457.30 & 369.49 & 405.62 & 364.09 \\
\hline
SUC & 113.31 & 100.42 & 127.74 & 211.74 & 141.07 & 133.59 \\
\hline
RUC & \multicolumn{6}{c|}{120.80} \\
\hline
NUC & 313.81 & 217.56 & 227.70 & 173.65 & 139.94 & 135.23 \\
\hline
CUC & 2571.70 & 1597.13 & - & - & - & - \\
\hline
\end{tabular}
\end{table}

\section{Conclusions}\label{sec:conclusions}
In this article, we studied a generic class of 2-DRLPs over 1-Wasserstein balls using affine policies. We showed that the problem of our interest has a tractable reformulation with a scale independent of sample size. Subsequently, we proposed a method for refining the Wasserstein ball to reduce the conservativeness of affine policies. To examine the effectiveness of the 2-DRLP formulation with an affine policy, we also developed a novel UC model for power systems under uncertainty and conducted extensive numerical experiments. Future research directions include analyzing the suboptimality of affine policies with additional assumptions on the problem structure and extending our study to multi-stage settings.

\bibliographystyle{IEEEtran}

\bibliography{reference}

% Generated by IEEEtran.bst, version: 1.14 (2015/08/26)
\begin{thebibliography}{10}
\providecommand{\url}[1]{#1}
\csname url@samestyle\endcsname
\providecommand{\newblock}{\relax}
\providecommand{\bibinfo}[2]{#2}
\providecommand{\BIBentrySTDinterwordspacing}{\spaceskip=0pt\relax}
\providecommand{\BIBentryALTinterwordstretchfactor}{4}
\providecommand{\BIBentryALTinterwordspacing}{\spaceskip=\fontdimen2\font plus
\BIBentryALTinterwordstretchfactor\fontdimen3\font minus
  \fontdimen4\font\relax}
\providecommand{\BIBforeignlanguage}[2]{{%
\expandafter\ifx\csname l@#1\endcsname\relax
\typeout{** WARNING: IEEEtran.bst: No hyphenation pattern has been}%
\typeout{** loaded for the language `#1'. Using the pattern for}%
\typeout{** the default language instead.}%
\else
\language=\csname l@#1\endcsname
\fi
#2}}
\providecommand{\BIBdecl}{\relax}
\BIBdecl

\bibitem{cho2022affine}
Y.~Cho and I.~Yang, ``On affine policies for {Wasserstein} distributionally
  robust unit commitment,'' in \emph{Proceedings of the 61st IEEE Conference on
  Decision and Control}, 2022.

\bibitem{nikzad2019two}
E.~Nikzad, M.~Bashiri, and F.~Oliveira, ``Two-stage stochastic programming
  approach for the medical drug inventory routing problem under uncertainty,''
  \emph{Comput. Ind. Eng.}, vol. 128, pp. 358--370, 2019.

\bibitem{mattia2017staffing}
S.~Mattia, F.~Rossi, M.~Servilio, and S.~Smriglio, ``Staffing and scheduling
  flexible call centers by two-stage robust optimization,'' \emph{Omega},
  vol.~72, pp. 25--37, 2017.

\bibitem{oksuz2020two}
M.~K. Oksuz and S.~I. Satoglu, ``A two-stage stochastic model for location
  planning of temporary medical centers for disaster response,'' \emph{Int. J.
  Disaster Risk Reduct.}, vol.~44, p. 101426, 2020.

\bibitem{bertsimas2013adaptive}
D.~Bertsimas, E.~Litvinov, X.~A. Sun, J.~Zhao, and T.~Zheng, ``Adaptive robust
  optimization for the security constrained unit commitment problem,''
  \emph{IEEE Trans. Power Syst.}, vol.~28, no.~1, pp. 52--63, 2013.

\bibitem{zhao2019two}
P.~Zhao, C.~Gu, D.~Huo, Y.~Shen, and I.~Hernando-Gil, ``Two-stage
  distributionally robust optimization for energy hub systems,'' \emph{IEEE
  Trans. Ind. Inform.}, vol.~16, no.~5, pp. 3460--3469, 2019.

\bibitem{shapiro1998simulation}
A.~Shapiro and T.~Homem-de Mello, ``A simulation-based approach to two-stage
  stochastic programming with recourse,'' \emph{Math. Program.}, vol.~81,
  no.~3, pp. 301--325, 1998.

\bibitem{ben2004adjustable}
A.~Ben-Tal, A.~Goryashko, E.~Guslitzer, and A.~Nemirovski, ``Adjustable robust
  solutions of uncertain linear programs,'' \emph{Math. Program.}, vol.~99,
  no.~2, pp. 351--376, 2004.

\bibitem{rahimian2019distributionally}
H.~Rahimian and S.~Mehrotra, ``Distributionally robust optimization: A
  review,'' \emph{arXiv preprint arXiv:1908.05659}, 2019.

\bibitem{bayraksan2015data}
G.~Bayraksan and D.~K. Love, ``Data-driven stochastic programming using
  phi-divergences,'' in \emph{The Operations Research Revolution}.\hskip 1em
  plus 0.5em minus 0.4em\relax INFORMS, 2015, pp. 1--19.

\bibitem{jiang2016data}
R.~Jiang and Y.~Guan, ``Data-driven chance constrained stochastic program,''
  \emph{Math. Program.}, vol. 158, no.~1, pp. 291--327, 2016.

\bibitem{sun2016convergence}
H.~Sun and H.~Xu, ``Convergence analysis for distributionally robust
  optimization and equilibrium problems,'' \emph{Math. Oper. Res.}, vol.~41,
  no.~2, pp. 377--401, 2016.

\bibitem{delage2010distributionally}
E.~Delage and Y.~Ye, ``Distributionally robust optimization under moment
  uncertainty with application to data-driven problems,'' \emph{Oper. Res.},
  vol.~58, no.~3, pp. 595--612, 2010.

\bibitem{bertsimas2019adaptive}
D.~Bertsimas, M.~Sim, and M.~Zhang, ``Adaptive distributionally robust
  optimization,'' \emph{Manage. Sci.}, vol.~65, no.~2, pp. 604--618, 2019.

\bibitem{wang2016likelihood}
Z.~Wang, P.~W. Glynn, and Y.~Ye, ``Likelihood robust optimization for
  data-driven problems,'' \emph{Comput. Manage. Sci.}, vol.~13, no.~2, pp.
  241--261, 2016.

\bibitem{kuhn2019wasserstein}
D.~Kuhn, P.~M. Esfahani, V.~A. Nguyen, and S.~Shafieezadeh-Abadeh,
  ``Wasserstein distributionally robust optimization: Theory and applications
  in machine learning,'' in \emph{Operations research \& management science in
  the age of analytics}.\hskip 1em plus 0.5em minus 0.4em\relax Informs, 2019,
  pp. 130--166.

\bibitem{gao2022distributionally}
R.~Gao and A.~Kleywegt, ``Distributionally robust stochastic optimization with
  wasserstein distance,'' \emph{Math. Oper. Res.}, 2022.

\bibitem{esfahani2018data}
P.~M. Esfahani and D.~Kuhn, ``Data-driven distributionally robust optimization
  using the {W}asserstein metric: Performance guarantees and tractable
  reformulations,'' \emph{Math. Program.}, vol. 171, no.~1, pp. 115--166, 2018.

\bibitem{zhao2018data}
C.~Zhao and Y.~Guan, ``Data-driven risk-averse stochastic optimization with
  {W}asserstein metric,'' \emph{Oper. Res. Lett.}, vol.~46, no.~2, pp.
  262--267, 2018.

\bibitem{gamboa2021decomposition}
C.~A. Gamboa, D.~M. Vallad{\~a}o, A.~Street, and T.~Homem-de Mello,
  ``Decomposition methods for {W}asserstein-based data-driven distributionally
  robust problems,'' \emph{Oper. Res. Lett.}, vol.~49, no.~5, pp. 696--702,
  2021.

\bibitem{wang2021second}
Z.~Wang, K.~You, S.~Song, and Y.~Zhang, ``Second-order conic programming
  approach for wasserstein distributionally robust two-stage linear programs,''
  \emph{IEEE Trans. Autom. Sci. Eng.}, vol.~19, no.~2, pp. 946--958, 2021.

\bibitem{duque2022distributionally}
D.~Duque, S.~Mehrotra, and D.~P. Morton, ``Distributionally robust two-stage
  stochastic programming,'' \emph{SIAM J. Optim.}, vol.~32, no.~3, pp.
  1499--1522, 2022.

\bibitem{hanasusanto2018conic}
G.~A. Hanasusanto and D.~Kuhn, ``Conic programming reformulations of two-stage
  distributionally robust linear programs over wasserstein balls,'' \emph{Oper.
  Res.}, vol.~66, no.~3, pp. 849--869, 2018.

\bibitem{bertsimas2022two}
D.~Bertsimas, S.~Shtern, and B.~Sturt, ``Two-stage sample robust
  optimization,'' \emph{Oper. Res.}, vol.~70, no.~1, pp. 624--640, 2022.

\bibitem{xie2020tractable}
W.~Xie, ``Tractable reformulations of two-stage distributionally robust linear
  programs over the type-$\infty$ wasserstein ball,'' \emph{Oper. Res. Lett.},
  vol.~48, no.~4, pp. 513--523, 2020.

\bibitem{gangammanavar2022stochastic}
H.~Gangammanavar and M.~Bansal, ``Stochastic decomposition method for two-stage
  distributionally robust linear optimization,'' \emph{SIAM J. Optim.},
  vol.~32, no.~3, pp. 1901--1930, 2022.

\bibitem{byeon2022two}
G.~Byeon and K.~Kim, ``Two-stage distributionally robust conic linear
  programming over 1-wasserstein balls,'' \emph{arXiv preprint
  arXiv:2211.05903}, 2022.

\bibitem{bansal2018decomposition}
M.~Bansal, K.-L. Huang, and S.~Mehrotra, ``Decomposition algorithms for
  two-stage distributionally robust mixed binary programs,'' \emph{SIAM J.
  Optim.}, vol.~28, no.~3, pp. 2360--2383, 2018.

\bibitem{kim2020dual}
K.~Kim, ``Dual decomposition of two-stage distributionally robust mixed-integer
  programming under the wasserstein ambiguity set,'' \emph{Preprint
  manuscript}, 2020.

\bibitem{rahmaniani2017benders}
R.~Rahmaniani, T.~G. Crainic, M.~Gendreau, and W.~Rei, ``The benders
  decomposition algorithm: A literature review,'' \emph{Eur. J. Oper. Res.},
  vol. 259, no.~3, pp. 801--817, 2017.

\bibitem{zeng2013solving}
B.~Zeng and L.~Zhao, ``Solving two-stage robust optimization problems using a
  column-and-constraint generation method,'' \emph{Oper. Res. Lett.}, vol.~41,
  no.~5, pp. 457--461, 2013.

\bibitem{holt1955linear}
C.~C. Holt, F.~Modigliani, and H.~A. Simon, ``A linear decision rule for
  production and employment scheduling,'' \emph{Manage. Sci.}, vol.~2, no.~1,
  pp. 1--30, 1955.

\bibitem{charnes1958cost}
A.~Charnes, W.~W. Cooper, and G.~H. Symonds, ``Cost horizons and certainty
  equivalents: an approach to stochastic programming of heating oil,''
  \emph{Manage. sci.}, vol.~4, no.~3, pp. 235--263, 1958.

\bibitem{garstka1974decision}
S.~J. Garstka and R.~J.-B. Wets, ``On decision rules in stochastic
  programming,'' \emph{Math. Program.}, vol.~7, no.~1, pp. 117--143, 1974.

\bibitem{georghiou2021optimality}
A.~Georghiou, A.~Tsoukalas, and W.~Wiesemann, ``On the optimality of affine
  decision rules in robust and distributionally robust optimization,''
  \emph{Available at Optimization Online}, 2021.

\bibitem{bodur2018two}
M.~Bodur and J.~R. Luedtke, ``Two-stage linear decision rules for multi-stage
  stochastic programming,'' \emph{Math. Program.}, pp. 1--34, 2018.

\bibitem{bertsimas2015performance}
D.~Bertsimas and H.~Bidkhori, ``On the performance of affine policies for
  two-stage adaptive optimization: a geometric perspective,'' \emph{Math.
  Program.}, vol. 153, no.~2, pp. 577--594, 2015.

\bibitem{el2021optimality}
O.~El~Housni and V.~Goyal, ``On the optimality of affine policies for budgeted
  uncertainty sets,'' \emph{Math. Oper. Res.}, vol.~46, no.~2, pp. 674--711,
  2021.

\bibitem{bemporad2003min}
A.~Bemporad, F.~Borrelli, and M.~Morari, ``Min-max control of constrained
  uncertain discrete-time linear systems,'' \emph{IEEE Trans. Automat. Contr.},
  vol.~48, no.~9, pp. 1600--1606, 2003.

\bibitem{kerrigan2003robust}
E.~C. Kerrigan and J.~M. Maciejowski, ``On robust optimization and the optimal
  control of constrained linear systems with bounded state disturbances,'' in
  \emph{2003 European Control Conference (ECC)}.\hskip 1em plus 0.5em minus
  0.4em\relax IEEE, 2003, pp. 1453--1458.

\bibitem{skaf2010design}
J.~Skaf and S.~P. Boyd, ``Design of affine controllers via convex
  optimization,'' \emph{IEEE Trans. Automat. Contr.}, vol.~55, no.~11, pp.
  2476--2487, 2010.

\bibitem{bertsimas2010optimality}
D.~Bertsimas, D.~A. Iancu, and P.~A. Parrilo, ``Optimality of affine policies
  in multistage robust optimization,'' \emph{Math. Oper. Res.}, vol.~35, no.~2,
  pp. 363--394, 2010.

\bibitem{hadjiyiannis2011efficient}
M.~J. Hadjiyiannis, P.~J. Goulart, and D.~Kuhn, ``An efficient method to
  estimate the suboptimality of affine controllers,'' \emph{IEEE Trans.
  Automat. Contr.}, vol.~56, no.~12, pp. 2841--2853, 2011.

\bibitem{zhang2017robust}
X.~Zhang, M.~Kamgarpour, A.~Georghiou, P.~Goulart, and J.~Lygeros, ``Robust
  optimal control with adjustable uncertainty sets,'' \emph{Automatica},
  vol.~75, pp. 249--259, 2017.

\bibitem{bertsimas2012power}
D.~Bertsimas and V.~Goyal, ``On the power and limitations of affine policies in
  two-stage adaptive optimization,'' \emph{Math. Program.}, vol. 134, no.~2,
  pp. 491--531, 2012.

\bibitem{calafiore2008multi}
G.~C. Calafiore, ``Multi-period portfolio optimization with linear control
  policies,'' \emph{Automatica}, vol.~44, no.~10, pp. 2463--2473, 2008.

\bibitem{fonseca2012international}
R.~J. Fonseca and B.~Rustem, ``International portfolio management with affine
  policies,'' \emph{Eur. J. Oper. Res.}, vol. 223, no.~1, pp. 177--187, 2012.

\bibitem{lorca2016multistage}
A.~Lorca, X.~A. Sun, E.~Litvinov, and T.~Zheng, ``Multistage adaptive robust
  optimization for the unit commitment problem,'' \emph{Oper. Res.}, vol.~64,
  no.~1, pp. 32--51, 2016.

\bibitem{duan2018distributionally}
C.~Duan, W.~Fang, L.~Jiang, L.~Yao, and J.~Liu, ``Distributionally robust
  chance-constrained approximate {AC}-{OPF} with {W}asserstein metric,''
  \emph{IEEE Trans. Power Syst.}, vol.~33, no.~5, pp. 4924--4936, 2018.

\bibitem{dehghan2019robust}
S.~Dehghan, N.~Amjady, and P.~Aristidou, ``A robust coordinated expansion
  planning model for wind farm-integrated power systems with flexibility
  sources using affine policies,'' \emph{IEEE Syst. J.}, vol.~14, no.~3, pp.
  4110--4118, 2019.

\bibitem{ratha2020affine}
A.~Ratha, A.~Schwele, J.~Kazempour, P.~Pinson, S.~S. Torbaghan, and A.~Virag,
  ``Affine policies for flexibility provision by natural gas networks to power
  systems,'' \emph{Electr. Power Syst. Res.}, vol. 189, p. 106565, 2020.

\bibitem{ben2020tractable}
A.~Ben-Tal, O.~El~Housni, and V.~Goyal, ``A tractable approach for designing
  piecewise affine policies in two-stage adjustable robust optimization,''
  \emph{Math. Program.}, vol. 182, no.~1, pp. 57--102, 2020.

\bibitem{thoma2022designing}
S.~Thom{\"a}, G.~Walther, and M.~Schiffer, ``Designing tractable piecewise
  affine policies for multi-stage adjustable robust optimization,'' \emph{arXiv
  preprint arXiv:2207.00403}, 2022.

\bibitem{chen2008linear}
X.~Chen, M.~Sim, P.~Sun, and J.~Zhang, ``A linear decision-based approximation
  approach to stochastic programming,'' \emph{Oper. Res.}, vol.~56, no.~2, pp.
  344--357, 2008.

\bibitem{chen2009uncertain}
X.~Chen and Y.~Zhang, ``Uncertain linear programs: Extended affinely adjustable
  robust counterparts,'' \emph{Oper. Res.}, vol.~57, no.~6, pp. 1469--1482,
  2009.

\bibitem{bampou2011scenario}
D.~Bampou and D.~Kuhn, ``Scenario-free stochastic programming with polynomial
  decision rules,'' in \emph{2011 50th IEEE Conference on Decision and Control
  and European Control Conference}.\hskip 1em plus 0.5em minus 0.4em\relax
  IEEE, 2011, pp. 7806--7812.

\bibitem{zhu2019wasserstein}
R.~Zhu, H.~Wei, and X.~Bai, ``Wasserstein metric based distributionally robust
  approximate framework for unit commitment,'' \emph{IEEE Trans. Power Syst.},
  vol.~34, no.~4, pp. 2991--3001, 2019.

\bibitem{zhou2019distributionally}
Y.~Zhou, M.~Shahidehpour, Z.~Wei, Z.~Li, G.~Sun, and S.~Chen,
  ``Distributionally robust unit commitment in coordinated electricity and
  district heating networks,'' \emph{IEEE Trans. Power Syst.}, vol.~35, no.~3,
  pp. 2155--2166, 2019.

\bibitem{chen2018distributionally}
Y.~Chen, Q.~Guo, H.~Sun, Z.~Li, W.~Wu, and Z.~Li, ``A distributionally robust
  optimization model for unit commitment based on {K}ullback--{L}eibler
  divergence,'' \emph{IEEE Trans. Power Syst.}, vol.~33, no.~5, pp. 5147--5160,
  2018.

\bibitem{ding2018duality}
T.~Ding, Q.~Yang, X.~Liu, C.~Huang, Y.~Yang, M.~Wang, and F.~Blaabjerg,
  ``Duality-free decomposition based data-driven stochastic
  security-constrained unit commitment,'' \emph{IEEE Trans. Sustain. Energy},
  vol.~10, no.~1, pp. 82--93, 2018.

\bibitem{duan2017data}
C.~Duan, L.~Jiang, W.~Fang, and J.~Liu, ``Data-driven affinely adjustable
  distributionally robust unit commitment,'' \emph{IEEE Trans. Power Syst.},
  vol.~33, no.~2, pp. 1385--1398, 2017.

\bibitem{ning2019data}
C.~Ning and F.~You, ``Data-driven wasserstein distributionally robust
  optimization for biomass with agricultural waste-to-energy network design
  under uncertainty,'' \emph{Appl. Energy}, vol. 255, p. 113857, 2019.

\bibitem{hou2021integrated}
W.~Hou, T.~Fang, Z.~Pei, and Q.-C. He, ``Integrated design of unmanned aerial
  mobility network: A data-driven risk-averse approach,'' \emph{Int. J. Prod.
  Econ.}, vol. 236, p. 108131, 2021.

\bibitem{liu2022data}
L.~Liu, S.~Song, Z.~Wang, and Y.~Zhang, ``Data-driven distributionally robust
  optimization for railway timetabling problem,'' \emph{IEEE Trans. Autom. Sci.
  Eng.}, 2022.

\bibitem{feige2007robust}
U.~Feige, K.~Jain, M.~Mahdian, and V.~Mirrokni, ``Robust combinatorial
  optimization with exponential scenarios,'' in \emph{International Conference
  on Integer Programming and Combinatorial Optimization}.\hskip 1em plus 0.5em
  minus 0.4em\relax Springer, 2007, pp. 439--453.

\bibitem{ben2011robust}
A.~Ben-Tal, B.~Do~Chung, S.~R. Mandala, and T.~Yao, ``Robust optimization for
  emergency logistics planning: Risk mitigation in humanitarian relief supply
  chains,'' \emph{Transp. Res. B: Methodol.}, vol.~45, no.~8, pp. 1177--1189,
  2011.

\bibitem{kammammettu2019two}
S.~Kammammettu and Z.~Li, ``Two-stage robust optimization of water treatment
  network design and operations under uncertainty,'' \emph{Ind. Eng. Chem.
  Res.}, vol.~59, no.~3, pp. 1218--1233, 2019.

\bibitem{gourtani2020distributionally}
A.~Gourtani, T.-D. Nguyen, and H.~Xu, ``A distributionally robust optimization
  approach for two-stage facility location problems,'' \emph{EURO J. Comput.
  Optim.}, vol.~8, no.~2, pp. 141--172, 2020.

\bibitem{jin2022wasserstein}
X.~Jin, B.~Liu, S.~Liao, C.~Cheng, Y.~Zhang, Z.~Zhao, and J.~Lu, ``Wasserstein
  metric-based two-stage distributionally robust optimization model for optimal
  daily peak shaving dispatch of cascade hydroplants under renewable energy
  uncertainties,'' \emph{Energy}, vol. 260, p. 125107, 2022.

\bibitem{zhou2020linear}
A.~Zhou, M.~Yang, M.~Wang, and Y.~Zhang, ``A linear programming approximation
  of distributionally robust chance-constrained dispatch with {W}asserstein
  distance,'' \emph{IEEE Trans. Power Syst.}, vol.~35, no.~5, pp. 3366--3377,
  2020.

\bibitem{till2003empirical}
J.~Till, S.~Engell, S.~Panek, and O.~Stursberg, ``Empirical complexity analysis
  of a milp-approach for optimization of hybrid systems,'' \emph{IFAC
  Proceedings Volumes}, vol.~36, no.~6, pp. 129--134, 2003.

\bibitem{Duque2020}
D.~Duque and D.~P. Morton, ``Distributionally robust stochastic dual dynamic
  programming,'' \emph{SIAM Journal on Optimization}, vol.~30, no.~4, pp.
  2841--2865, 2020.

\bibitem{yang2020wasserstein}
I.~Yang, ``Wasserstein distributionally robust stochastic control: A
  data-driven approach,'' \emph{IEEE Trans. Automat. Contr.}, vol.~66, no.~8,
  pp. 3863--3870, 2021.

\bibitem{zhang2022distributionally}
S.~Zhang and X.~A. Sun, ``On distributionally robust multistage convex
  optimization: Data-driven models and performance,'' \emph{arXiv preprint
  arXiv:2210.08433}, 2022.

\bibitem{rahimian2022effective}
H.~Rahimian, G.~Bayraksan, and T.~H. De-Mello, ``Effective scenarios in
  multistage distributionally robust optimization with a focus on total
  variation distance,'' \emph{SIAM J. Optim.}, vol.~32, no.~3, pp. 1698--1727,
  2022.

\bibitem{poolla2020wasserstein}
B.~K. Poolla, A.~R. Hota, S.~Bolognani, D.~S. Callaway, and A.~Cherukuri,
  ``Wasserstein distributionally robust look-ahead economic dispatch,''
  \emph{IEEE Trans. Power Syst.}, vol.~36, no.~3, pp. 2010--2022, 2020.

\bibitem{bemporad1999control}
A.~Bemporad and M.~Morari, ``Control of systems integrating logic, dynamics,
  and constraints,'' \emph{Automatica}, vol.~35, no.~3, pp. 407--427, 1999.

\bibitem{gabrel2014robust}
V.~Gabrel, M.~Lacroix, C.~Murat, and N.~Remli, ``Robust location transportation
  problems under uncertain demands,'' \emph{Discret. Appl. Math.}, vol. 164,
  pp. 100--111, 2014.

\bibitem{cho2019box}
Y.~Cho, T.~Ishizaki, N.~Ramdani, and J.-i. Imura, ``Box-based temporal
  decomposition of multi-period economic dispatch for two-stage robust unit
  commitment,'' \emph{IEEE Trans. Power Syst.}, vol.~34, no.~4, pp. 3109--3118,
  2019.

\bibitem{IIT}
\BIBentryALTinterwordspacing
IIT. Index of data {I}llinois {I}nstitute of {T}echnology. Accessed Jan. 27,
  2022. [Online]. Available: \url{https://motor.ece.iit.edu/data/}
\BIBentrySTDinterwordspacing

\bibitem{ordoudis2016updated}
C.~Ordoudis, P.~Pinson, J.~M.~M. Gonz{\'a}lez, and M.~Zugno, ``An updated
  version of the {IEEE} {RTS} 24-bus system for electricity market and power
  system operation studies,'' Technical University of Denmark (DTU), Tech.
  Rep., 2016.

\bibitem{pena2017extended}
I.~Pena, C.~B. Martinez-Anido, and B.-M. Hodge, ``An extended {IEEE} 118-bus
  test system with high renewable penetration,'' \emph{IEEE Trans. Power
  Syst.}, vol.~33, no.~1, pp. 281--289, 2017.

\bibitem{hodge2012wind}
B.-M. Hodge, D.~Lew, M.~Milligan, H.~Holttinen, S.~Sillanpaa,
  E.~G{\'o}mez-L{\'a}zaro, R.~Scharff, L.~Soder, X.~G. Lars{\'e}n, G.~Giebel
  \emph{et~al.}, ``Wind power forecasting error distributions: An international
  comparison,'' NREL, Golden, CO (United States), Tech. Rep., 2012.

\bibitem{heitsch2003scenario}
H.~Heitsch and W.~R{\"o}misch, ``Scenario reduction algorithms in stochastic
  programming,'' \emph{Comput. Optim. Appl.}, vol.~24, no.~2, pp. 187--206,
  2003.

\bibitem{canwea}
\BIBentryALTinterwordspacing
Can{WEA}. Wind data. Accessed Nov. 11, 2021. [Online]. Available:
  \url{https://canwea.ca/wind-integration-study/wind-data}
\BIBentrySTDinterwordspacing

\bibitem{nrel}
\BIBentryALTinterwordspacing
NREL. Solar power data for integration studies. Accessed Nov. 11, 2021.
  [Online]. Available: \url{https://www.nrel.gov/grid/solar-power-data.html}
\BIBentrySTDinterwordspacing

\end{thebibliography}

\end{document}